\documentclass[reqno]{amsart}
\usepackage{kotex}
\usepackage{amsmath}
\usepackage{amssymb}
\usepackage{amscd}
\usepackage[all]{xy}
\usepackage{color}
\usepackage{mathrsfs}
\usepackage{tikz-cd}

\newtheorem{theorem}{Theorem}[section]
\newtheorem{lemma}[theorem]{Lemma}
\newtheorem{corollary}[theorem]{Corollary}
\newtheorem{proposition}[theorem]{Proposition}
\newcommand{\acknowledgments}{\section*{Acknowledgments}}

\theoremstyle{definition}
\newtheorem{definition}[theorem]{Definition}
\newtheorem{example}[theorem]{Example}
\newtheorem{remark}[theorem]{Remark}

\begin{document}
\title[Multiplicity-free modules for a nil-DAHA]
{On finite-dimensional multiplicity-free irreducible modules for a nil-DAHA of type $(C_1^\vee,C_1)$}

\author[J. Park]{Jongyook Park}
\address{(Park) Department of Mathematics,
College of Natural Sciences,
Kyungpook National University, Daegu 41566,
Republic of Korea}
\email{jongyook@knu.ac.kr}

\author[J.-H. Lee]{Jae-Ho Lee}
\address{(Lee) Department of Mathematics and Statistics,
University of North Florida,
Jacksonville, FL 32224, USA}
\email{jaeho.lee@unf.edu}

\author[H. Baek]{Hyungtae Baek$^{\dag}$}
\address{(Baek) School of Mathematics,
Kyungpook National University, Daegu 41566,
Republic of Korea}
\email{htbaek5@gmail.com}

\thanks{$\dag$: Corresponding author}
\thanks{All authors contributed equally to this paper}
\subjclass[2020]{20C08, 16D60, 33D80}
\keywords{nil-DAHA; multiplicity-free modules; adapted block bases}

\begin{abstract}
Fix nonzero $r_0,r_1\in\mathbb{C}$.
Let $\widetilde{\mathcal H}$ denote a nil-DAHA of type
$(C_1^\vee,C_1)$ defined by generators $t_0,u_0,t_1,u_1$ and relations
$(t_i-r_i)(t_i-r_i^{-1})=0$ for $i\in\{0,1\}$,
$u_0^2=u_0$, $u_1^2=0$, and
$u_0t_0t_1u_1=0=t_1u_1u_0t_0$.
Set $A=u_0t_0$ and $B=t_1u_1$.
A finite-dimensional $\widetilde{\mathcal H}$-module is called
$(A,B)$-multiplicity-free, or simply multiplicity-free, if $A$ and $B$
are simultaneously diagonalizable and every nonzero common eigenspace
is one-dimensional.
We consider finite-dimensional irreducible multiplicity-free modules that
have a certain ordered basis, which we call an adapted block basis.
For $D\geq 1$, we construct a family of $2D$-dimensional
$\widetilde{\mathcal H}$-modules $E_D$, and for $D\geq 0$, we construct
a family of $(2D+1)$-dimensional $\widetilde{\mathcal H}$-modules $O_D$.
We determine which members of these families are multiplicity-free and
irreducible.
We prove that every finite-dimensional irreducible
$\widetilde{\mathcal H}$-module that is multiplicity-free and has an
adapted block basis is isomorphic to a module of the form $E_D$ or $O_D$.
We also determine when two members of the same family are
isomorphic.
\end{abstract}

\maketitle

\section{Introduction}

Double affine Hecke algebras (DAHAs) were introduced by Cherednik
\cite{Cherednik1992} and subsequently used in his proofs of Macdonald's
conjectures \cite{Cherednik1995AnnM,Cherednik1995InvM}.
Sahi extended the theory to the nonreduced affine root system of type
$(C_n^\vee,C_n)$ and related the corresponding DAHA to nonsymmetric
Koornwinder polynomials \cite{Sahi1999}.
In the rank-one case $n=1$, the symmetric and
nonsymmetric Koornwinder polynomials reduce to the Askey-Wilson and nonsymmetric Askey-Wilson polynomials, respectively
\cite[Sections~6.5,~6.6]{Macdonald2003}.
Thus the DAHA of type $(C_1^\vee,C_1)$ provides an algebraic framework
for studying the Askey-Wilson polynomials and their nonsymmetric
counterparts \cite{NoumiStokman2004}.
The finite-dimensional irreducible modules for the universal DAHA of type
$(C_1^\vee,C_1)$ were classified by Huang \cite{Huang}.
Nil-DAHAs arise from certain degenerations of DAHAs.
Cherednik and Orr studied the nil-DAHA for the root system $A_1$ and its
connections with symmetric and nonsymmetric $q$-Whittaker functions and
the $q$-Toda eigenvalue problem
\cite{CherednikOrr2012,CherednikOrr2013}.
They extended this work to reduced root systems
\cite{CherednikOrr2015}.
The present paper concerns finite-dimensional irreducible modules for a
rank-one nil-DAHA.

One motivation for this paper comes from the theory of $Q$-polynomial
distance-regular graphs.
In \cite{Lee2013}, Lee associated certain $Q$-polynomial
distance-regular graphs with finite-dimensional irreducible modules for
the universal DAHA of type $(C_1^\vee,C_1)$.
In \cite{Lee2017}, these modules were used to obtain nonsymmetric
$q$-Racah polynomials from such graphs.
These polynomials are finite counterparts of the nonsymmetric
Askey-Wilson polynomials.
Subsequently, in \cite{LeeTanaka}, Lee and Tanaka pursued
this approach in the setting of dual polar graphs and considered a
nil-DAHA of type $(C_1^\vee,C_1)$.
More specifically, let $\Gamma$ be a dual polar graph with diameter
$D\geq 3$, let $x$ be a vertex of $\Gamma$, and let $C$ be a maximal
clique containing $x$.
Using the pair $(x,C)$, they constructed a $2D$-dimensional irreducible
module for this nil-DAHA.
This module was used to define nonsymmetric dual $q$-Krawtchouk
polynomials and to derive their recurrence and orthogonality relations.
Moreover, with respect to a suitable basis, the generators are represented
by block diagonal matrices with $1\times 1$ and $2\times 2$ blocks.
The pattern of these blocks motivates the \emph{adapted block bases} used in
the present paper; see Definition~\ref{adapted basis}.

We now state a goal of this paper.
For this purpose, we first recall the definition of the nil-DAHA of type
$(C_1^\vee,C_1)$ considered here.
\begin{definition}[{\cite[Definition~5.1]{LeeTanakaSLC}}]\label{nil.H}
Fix nonzero $r_0,r_1\in\mathbb{C}$.
Let $\widetilde{\mathcal{H}}=\widetilde{\mathcal{H}}(r_0,r_1)$ denote
the unital associative $\mathbb{C}$-algebra generated by
$t_0,u_0,t_1,u_1$ subject to the relations
\begin{itemize}
\item[(i)] $(t_i-r_i)(t_i-r_i^{-1})=0$ for $i\in\{0,1\}$;
\item[(ii)] $u_0^2=u_0$;
\item[(iii)] $u_1^2=0$;
\item[(iv)] $u_0t_0t_1u_1=0=t_1u_1u_0t_0$.
\end{itemize}
We call $\widetilde{\mathcal{H}}$ a \emph{nil-DAHA of type
$(C_1^\vee,C_1)$}.\footnote{The definition of a nil-DAHA of type
$(C_1^\vee,C_1)$ given here differs from the one in
\cite[Definition~8.1]{LeeTanaka}. In fact, the algebra defined there is a
homomorphic image of $\widetilde{\mathcal H}$.}
\end{definition}
\noindent
Define $A,B\in\widetilde{\mathcal H}$ by $A=u_0t_0$ and $B=t_1u_1$.
We say that a finite-dimensional $\widetilde{\mathcal H}$-module $V$ is
\emph{$(A,B)$-multiplicity-free} if $A$ and $B$ are simultaneously
diagonalizable on $V$ and every nonzero common eigenspace of $A$ and $B$
is one-dimensional.
In the present paper, we classify the finite-dimensional irreducible
$\widetilde{\mathcal H}$-modules that are $(A,B)$-multiplicity-free and
have an adapted block basis.

We now summarize the results of this paper.
For $D\geq 1$, we construct a family of $2D$-dimensional
$\widetilde{\mathcal H}$-modules $E_D$, and for $D\geq 0$, we construct
a family of $(2D+1)$-dimensional $\widetilde{\mathcal H}$-modules
$O_D$ (Propositions~\ref{prop:E_D} and~\ref{prop:O_D}).
The actions of $t_0,u_0,t_1,u_1$ on these modules are given by block
matrices.
The bases used in these constructions are adapted block bases
(Propositions~\ref{prop:E_D,abb} and~\ref{prop:O_D,abb}).
We give conditions on the parameters under which these modules are
$(A,B)$-multiplicity-free.
Under these conditions, we prove that the modules are irreducible
(Theorems~\ref{even irreducible} and~\ref{odd irreducible}).
Finally, we show that every finite-dimensional irreducible
$\widetilde{\mathcal H}$-module that is $(A,B)$-multiplicity-free and
has an adapted block basis is isomorphic to a module of the form $E_D$
or $O_D$, and we determine the isomorphism classes within each family
(Theorem~\ref{thm:main}).
This classification is the main result of the paper.

This paper is organized as follows.
In Section~\ref{Sec:MFABB}, we introduce the notions
of $(A,B)$-multiplicity-free $\widetilde{\mathcal{H}}$-modules
and adapted block bases, and give a necessary condition for irreducibility.
In Section~\ref{Sec:ED,OD}, we construct the families $E_D$ and $O_D$ and discuss their multiplicity-freeness and irreducibility.
In Section~\ref{Sec:MR}, we prove the classification theorem and determine the isomorphism classes within the families $E_D$ and $O_D$.
In Section~\ref{Sec:ex}, we give three examples of 
finite-dimensional irreducible $\widetilde{\mathcal{H}}$-modules, including the module arising from a dual
polar graph in \cite{LeeTanaka}.

\section{Multiplicity-free modules and adapted block bases}\label{Sec:MFABB}

In this section, we introduce the notions of multiplicity-free modules and adapted block bases for finite-dimensional $\widetilde{\mathcal{H}}$-modules. 
We also give a necessary condition for an $\widetilde{\mathcal{H}}$-module with an adapted block basis to be irreducible.
Throughout this paper, we set
\begin{equation}\label{eq:A,B}
A=u_0t_0,\qquad B=t_1u_1.
\end{equation}
Observe that $AB=BA=0$ by Definition~\ref{nil.H}(iv).
Let $V$ be a finite-dimensional $\widetilde{\mathcal{H}}$-module.
For $\alpha, \beta \in \mathbb{C}$, define
$$
V_{\alpha, \beta} = \{v \in V \mid Av = \alpha v, Bv = \beta v\}.
$$
We call $V_{\alpha,\beta}$ the \emph{$(\alpha,\beta)$-joint eigenspace}
of $(A,B)$, and call a nonzero vector $v\in V_{\alpha,\beta}$
an \emph{$(\alpha,\beta)$-joint eigenvector} of $(A,B)$.
Suppose that $V_{\alpha,\beta}\neq 0$, and choose
$0\neq v\in V_{\alpha,\beta}$.
Since $AB=0$, we have
$$
0=ABv=\alpha\beta v.
$$ 
It follows that $\alpha\beta=0$, so $\alpha=0$ or $\beta=0$.
Hence every nonzero joint eigenspace of $(A,B)$ is of the form
$V_{\alpha,0}$ or $V_{0,\beta}$.

Let $V$ be a finite-dimensional $\widetilde{\mathcal H}$-module.
We say that $V$ is {\it $(A,B)$-multiplicity-free} whenever the following conditions hold:
\begin{itemize}
\item[(i)] $A$ and $B$ are diagonalizable on $V$;
\item[(ii)] every nonzero joint eigenspace of $(A,B)$ is one-dimensional.
\end{itemize}
Throughout this paper, we refer to an $(A,B)$-multiplicity-free module simply as a {\it multiplicity-free module}.

\begin{remark}\label{rmk:ds}
Let $V$ be a finite-dimensional multiplicity-free 
$\widetilde{\mathcal H}$-module.
Since $A$ and $B$ commute and are diagonalizable on $V$,
they are simultaneously diagonalizable.
Therefore
$$
V=\bigoplus_{\substack{(\alpha,\beta)\in\mathbb C^2\\
V_{\alpha,\beta}\neq 0}}
V_{\alpha,\beta},
$$
and each summand in this decomposition is one-dimensional.
\end{remark}

Let $V$ be a finite-dimensional $\widetilde{\mathcal H}$-module, and let
$W$ be a subspace of $V$.
For $i\in\{0,1\}$, we say that $t_i$ {\it preserves} $W$ if
$t_iW\subseteq W$.
Suppose that $W$ is two-dimensional, and let $\{u,v\}$ be an ordered basis of $W$.
If $t_i$ preserves $W$, then the restriction $t_i|_W$ is represented by a
$2\times2$ matrix with respect to $\{u,v\}$.
We call this matrix the {\it two-dimensional $t_i$-block} with respect to $\{u,v\}$.

We now introduce the notion of an adapted block basis, which plays a central role in our classification.

\begin{definition}\label{adapted basis}
Let $V$ be a finite-dimensional $\widetilde{\mathcal H}$-module.
\begin{enumerate}
\item[\rm (1)]
Suppose that $\dim(V)=2D$ for an integer $D\geq 1$.
An ordered basis
$$
b_0,a_1,b_1,\ldots,a_{D-1},b_{D-1},a_D
$$
is called an {\it even adapted block basis} if there exist nonzero scalars
$$
\lambda_1, \lambda_2, \dots , \lambda_D, \qquad
\mu_0, \mu_1, \dots, \mu_{D-1}
$$
such that the following conditions hold:
\begin{enumerate}
\item[(i)]
$a_i\in V_{\lambda_i,0}$ for $1\leq i\leq D$;
\item[(ii)]
$b_i\in V_{0,\mu_i}$ for $0\leq i\leq D-1$;
\item[(iii)]
$t_0$ preserves $\mathbb Cb_0$ and $\mathbb Ca_D$;
\item[(iv)]
$t_0$ preserves ${\rm span}\{a_i,b_i\}$ for $1\leq i\leq D-1$;
\item[(v)]
$t_1$ preserves ${\rm span}\{b_i,a_{i+1}\}$ for $0\leq i\leq D-1$.
\end{enumerate}

\item[\rm (2)]
Suppose that $\dim(V)=2D+1$ for an integer $D\geq 1$.
An ordered basis
$$
a_0,b_0,a_1,b_1,\ldots,a_{D-1},b_{D-1},a_D
$$
is called an {\it odd adapted block basis} if there exist nonzero scalars
$$
\lambda_0, \lambda_1, \dots , \lambda_D, \qquad
\mu_0, \mu_1, \dots, \mu_{D-1}
$$
such that the following conditions hold:
\begin{enumerate}
\item[(i)]
$a_i\in V_{\lambda_i,0}$ for $0\leq i\leq D$;
\item[(ii)]
$b_i\in V_{0,\mu_i}$ for $0\leq i\leq D-1$;
\item[(iii)]
$t_0$ preserves $\mathbb Ca_D$ and $t_1$ preserves $\mathbb Ca_0$;
\item[(iv)]
$t_0$ preserves ${\rm span}\{a_i,b_i\}$ for $0\leq i\leq D-1$;
\item[(v)]
$t_1$ preserves ${\rm span}\{b_i,a_{i+1}\}$ for $0\leq i\leq D-1$.
\end{enumerate}
\smallskip
\noindent
If $\dim(V)=1$, a basis $\{a_0\}$ is called an
{\it odd adapted block basis} if
$a_0\in V_{\lambda_0,0}$ for some $\lambda_0\in\mathbb C^\times$.
\end{enumerate}
When no confusion can arise, we simply call an even or odd adapted block basis an {\it adapted block basis}.
\end{definition}

\begin{remark}
If a finite-dimensional $\widetilde{\mathcal H}$-module $V$ has an adapted block basis, then
$V_{0,0}=0$.
Indeed, write $v\in V_{0,0}$ as a linear combination of the vectors in an adapted block basis.
Since all $\lambda_i$ and $\mu_i$ are nonzero, the equations
$Av=0$ and $Bv=0$ imply that the coefficients of the $a_i$ and the $b_i$, respectively, are zero.
Hence $v=0$.
\end{remark}

\begin{remark}
For a finite-dimensional $\widetilde{\mathcal H}$-module,
$(A,B)$-multiplicity-freeness and the existence of an adapted block basis
are independent conditions.
Multiplicity-freeness guarantees a basis of joint eigenvectors of $(A,B)$,
but it does not imply the preservation conditions for $t_0$ and $t_1$
in Definition~\ref{adapted basis}.
Conversely, a module with an adapted block basis need not be
multiplicity-free, since the definition does not require the $\lambda_i$
or the $\mu_i$ to be mutually distinct.
\end{remark}

We next consider the irreducibility of modules with an adapted block basis.
Recall that a module is \emph{irreducible} if it has no nonzero proper submodule.
We first give a criterion for determining when a subspace spanned by a
subset of an adapted block basis is an $\widetilde{\mathcal H}$-submodule.

\begin{lemma}\label{lem:submod-criterion}
Let $V$ be a finite-dimensional
$\widetilde{\mathcal H}$-module with an adapted block basis.
Let $U$ be a subspace of $V$ spanned by a subset of the adapted block
basis.
If $t_0$ and $t_1$ preserve $U$, then $U$ is an
$\widetilde{\mathcal H}$-submodule of $V$.
\end{lemma}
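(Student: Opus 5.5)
The plan is to show that $u_0$ and $u_1$ also preserve $U$. Since $\widetilde{\mathcal H}$ is generated by $t_0,u_0,t_1,u_1$, this will make $U$ a submodule. The key observation is that $t_0$ and $t_1$ are invertible on $V$. Indeed, relation (i) of Definition~\ref{nil.H} gives
$$t_i^2=(r_i+r_i^{-1})t_i-1,$$
so
$$t_i\bigl((r_i+r_i^{-1})-t_i\bigr)=1.$$
Thus $t_i^{-1}=(r_i+r_i^{-1})-t_i$ is a polynomial in $t_i$. Because $t_i$ preserves $U$, so does $t_i^{-1}$.

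Next, I would express $u_0$ and $u_1$ through $A$ and $B$:
$$u_0=u_0t_0t_0^{-1}=At_0^{-1},\qquad u_1=t_1^{-1}t_1u_1=t_1^{-1}B.$$
So it suffices to check that $A$ and $B$ preserve $U$. This is where the hypothesis on $U$ enters. $U$ is spanned by a subset of the adapted block basis, and each $a_i$ is an eigenvector of $A$ (eigenvalue $\lambda_i$) and of $B$ (eigenvalue $0$). Likewise each $b_i$ is an eigenvector of $A$ (eigenvalue $0$) and of $B$ (eigenvalue $\mu_i$). Hence $A$ and $B$ map each spanning vector of $U$ to a scalar multiple of itself, so they preserve $U$.

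Combining these facts, $u_0U=At_0^{-1}U\subseteq AU\subseteq U$ and $u_1U=t_1^{-1}BU\subseteq t_1^{-1}U\subseteq U$. Together with $t_0U\subseteq U$ and $t_1U\subseteq U$, all four generators preserve $U$, so $U$ is an $\widetilde{\mathcal H}$-submodule. The one-dimensional case $\{a_0\}$ needs no separate treatment, since the argument only uses that the basis vectors are joint eigenvectors.

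There is no real obstacle. The only point needing care is the invertibility of $t_i$ (this uses $r_i\neq 0$ only so that $r_i^{-1}$ makes sense) and the fact that its inverse lies in $\mathbb C[t_i]$.
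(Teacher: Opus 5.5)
Your proposal is correct and follows essentially the same route as the paper's proof: write $u_0=At_0^{-1}$ and $u_1=t_1^{-1}B$, and note that $A$ and $B$ preserve $U$ because $U$ is spanned by joint eigenvectors. The only difference is minor: you show that $t_i^{-1}$ preserves $U$ via the explicit formula $t_i^{-1}=(r_i+r_i^{-1})-t_i\in\mathbb C[t_i]$, whereas the paper argues that $t_i$ restricts to a bijection of the finite-dimensional space $U$; both justifications are valid.
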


\begin{proof}
Since $t_0$ and $t_1$ are invertible and preserve the finite-dimensional
space $U$, their restrictions to $U$ are bijective.
Hence $t_0^{-1}$ and $t_1^{-1}$ preserve $U$.
Since $U$ is spanned by joint eigenvectors of $(A,B)$, both $A$ and $B$ preserve $U$.
It follows from
$$
u_0=At_0^{-1},\qquad u_1=t_1^{-1}B
$$
that $u_0$ and $u_1$ preserve $U$.
Thus $U$ is preserved by the generators
$t_0,u_0,t_1,u_1$ of $\widetilde{\mathcal H}$, and hence is an
$\widetilde{\mathcal H}$-submodule of $V$.
\end{proof}

The following example shows how a zero off-diagonal entry in a
two-dimensional $t_i$-block gives rise to a nonzero proper submodule.

\begin{example}\label{reducible example}
Let $V$ be an $\widetilde{\mathcal H}$-module with an adapted block basis
$\{b_0,a_1,b_1,a_2\}$.
By Definition~\ref{adapted basis}(1)(iv), $t_0$ preserves
${\rm span}\{a_1,b_1\}$.
Assume that the two-dimensional $t_0$-block with respect to $\{a_1,b_1\}$ is
\begin{equation*}
\begin{pmatrix}
p&0\\
r&s
\end{pmatrix}.
\end{equation*}
Consider the nonzero proper subspace $W={\rm span}\{b_1,a_2\}$ of $V$.
From the displayed matrix, $t_0b_1=sb_1$, and by Definition~\ref{adapted basis}(1)(iii), $t_0a_2 \in \mathbb{C}a_2$. 
Hence $t_0$ preserves $W$.
By Definition~\ref{adapted basis}(1)(v), $t_1$ also preserves $W$. 
It follows from Lemma~\ref{lem:submod-criterion} that $W$ is an $\widetilde{\mathcal H}$-submodule of $V$.
Therefore the module $V$ is reducible.
\end{example}

Example~\ref{reducible example} illustrates how
a zero off-diagonal entry in a two-dimensional $t_0$-block
gives rise to a nonzero proper $\widetilde{\mathcal H}$-submodule.
The following proposition generalizes this observation.

\begin{proposition}\label{reducible condition}
Let $V$ be a finite-dimensional $\widetilde{\mathcal H}$-module with an adapted block basis.
If, for some $i\in\{0,1\}$, a two-dimensional $t_i$-block has a zero off-diagonal entry, then $V$ is reducible.
\end{proposition}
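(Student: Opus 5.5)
The plan is to generalize Example~\ref{reducible example}. A zero off-diagonal entry means one basis vector of a two-dimensional block is mapped into its own line. I will cut the adapted block basis at that point and take $U$ to be the span of all basis vectors on one side of the cut. Then I check that $t_0$ and $t_1$ preserve $U$ and apply Lemma~\ref{lem:submod-criterion}. I write the basis in its linear order: $b_0,a_1,b_1,\ldots,a_D$ in the even case and $a_0,b_0,\ldots,a_D$ in the odd case. The $t_0$-blocks are $\mathbb{C}b_0$ (even case), ${\rm span}\{a_i,b_i\}$, and $\mathbb{C}a_D$. The $t_1$-blocks are ${\rm span}\{b_i,a_{i+1}\}$ and, in the odd case, $\mathbb{C}a_0$. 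Hence each generator $t_j$ is block diagonal with respect to a partition of the ordered basis into consecutive intervals of length $1$ or $2$.

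Suppose a two-dimensional $t_j$-block with respect to an ordered pair $\{x,y\}$ (consecutive in the basis, $x$ before $y$) has a zero off-diagonal entry. There are two cases.

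If the $(1,2)$ entry is zero, then $t_jy\in\mathbb{C}y$. I take $U$ to be the span of $y$ and all basis vectors after $y$. This is the situation of Example~\ref{reducible example}.

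If the $(2,1)$ entry is zero, then $t_jx\in\mathbb{C}x$. I take $U$ to be the span of $x$ and all basis vectors before $x$.

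In either case $U$ is nonzero and proper, since $x\notin U$ in the first case and $y\notin U$ in the second.

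Next I check that $t_j$ and $t_{1-j}$ preserve $U$. Every $t_j$-block other than $\{x,y\}$ lies entirely on one side of the cut between $x$ and $y$, because the blocks are consecutive intervals. So $t_j$ preserves $U$, using $t_jy\in\mathbb{C}y$ (respectively $t_jx\in\mathbb{C}x$) for the split block. For the other generator $t_{1-j}$, the block boundaries interlace with those of $t_j$. A $t_0$-block has the form $\{a_i,b_i\}$, while the $t_1$-blocks are $\{b_{i-1},a_i\}$ and $\{b_i,a_{i+1}\}$. So the cut between $x$ and $y$ falls exactly at a boundary between two $t_{1-j}$-blocks, and every $t_{1-j}$-block is contained either in $U$ or in its complementary span. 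Hence $t_{1-j}$ preserves $U$. Lemma~\ref{lem:submod-criterion} then shows that $U$ is a submodule, so $V$ is reducible.

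The main obstacle is the bookkeeping in the interlacing claim: I have to verify it uniformly for both parities and both values of $j$, including the boundary blocks $\mathbb{C}b_0$ and $\mathbb{C}a_D$ for $t_0$ and $\mathbb{C}a_0$ for $t_1$. I will handle this by listing the cut positions explicitly. A cut inside a $t_0$-block $\{a_i,b_i\}$ lies between $a_i$ and $b_i$, which is the boundary between the $t_1$-blocks $\{b_{i-1},a_i\}$ (or $\mathbb{C}a_0$ when $i=0$) and $\{b_i,a_{i+1}\}$. A cut inside a $t_1$-block $\{b_i,a_{i+1}\}$ lies between $b_i$ and $a_{i+1}$, which is the boundary between the $t_0$-blocks $\{a_i,b_i\}$ (or $\mathbb{C}b_0$ when $i=0$ in the even case) and $\{a_{i+1},b_{i+1}\}$ (or $\mathbb{C}a_D$ when $i+1=D$). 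The one-dimensional case $\dim V=1$ contains no two-dimensional blocks, so the statement is vacuous there.
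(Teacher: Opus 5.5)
Your proposal is correct and follows the same route as the paper's proof: split the ordered adapted block basis at the offending block, take the span of the tail (zero upper-right entry) or the head (zero lower-left entry), check that $t_0$ and $t_1$ preserve it, and apply Lemma~\ref{lem:submod-criterion}. Your interlacing argument treats both parities at once, whereas the paper writes out only the even case and calls the odd case similar.
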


\begin{proof}
If $\dim(V)=1$, then there are no two-dimensional $t_i$-blocks, and there
is nothing to prove.
Hence we may assume that $\dim(V)\geq 2$.

\medskip
\noindent
{\it Case 1.}
Suppose that $\dim(V)=2D$ with $D\geq 1$, and let
$$
\{b_0,a_1,b_1,\ldots,a_{D-1},b_{D-1},a_D\}
$$
be an adapted block basis of $V$.

We first consider the two-dimensional $t_0$-blocks.
If $D=1$, then there are no such blocks.
Thus for the $t_0$-blocks, we may assume that $D\geq 2$.
Let $1\leq i\leq D-1$, and write the $t_0$-block with respect to
$\{a_i,b_i\}$ as
$$
\begin{pmatrix}
p&q\\
r&s
\end{pmatrix}.
$$
Suppose that $q=0$, and set
$$
W={\rm span}\{b_i,a_{i+1},b_{i+1},\ldots,
a_{D-1},b_{D-1},a_D\}.
$$
Then $t_0b_i=sb_i$.
By Definition~\ref{adapted basis}(1)(iii) and (iv), $t_0$ preserves
${\rm span}\{a_j,b_j\}$ for $i+1\leq j\leq D-1$ and
$\mathbb Ca_D$.
Hence $t_0$ preserves $W$.
By Definition~\ref{adapted basis}(1)(v), $t_1$ preserves
${\rm span}\{b_j,a_{j+1}\}$ for $i\leq j\leq D-1$, and hence
preserves $W$.
It follows from Lemma~\ref{lem:submod-criterion} that $W$ is an
$\widetilde{\mathcal H}$-submodule of $V$.
Since $W$ is nonzero and proper, $V$ is reducible.

\noindent
Suppose that $r=0$, and set
$$
W'={\rm span}\{b_0,a_1,b_1,\ldots,
a_{i-1},b_{i-1},a_i\}.
$$
Then $t_0a_i=pa_i$.
By Definition~\ref{adapted basis}(1)(iii) and (iv), $t_0$ preserves
$\mathbb Cb_0$ and
${\rm span}\{a_j,b_j\}$ for $1\leq j\leq i-1$.
Hence $t_0$ preserves $W'$.
By Definition~\ref{adapted basis}(1)(v), $t_1$ preserves
${\rm span}\{b_j,a_{j+1}\}$ for $0\leq j\leq i-1$, and hence
preserves $W'$.
It follows from Lemma~\ref{lem:submod-criterion} that $W'$ is an
$\widetilde{\mathcal H}$-submodule of $V$.
Since $W'$ is nonzero and proper, $V$ is reducible.

We next consider the two-dimensional $t_1$-blocks.
Let $0\leq i\leq D-1$, and write the $t_1$-block with respect to
$\{b_i,a_{i+1}\}$ as
$
\begin{pmatrix}
p' & q'\\
r' & s'
\end{pmatrix}.
$
By an argument similar to that for the $t_0$-blocks, if $q'=0$, then
$$
{\rm span}\{a_{i+1},b_{i+1},\ldots,
a_{D-1},b_{D-1},a_D\}
$$
is a nonzero proper $\widetilde{\mathcal H}$-submodule of $V$, and if
$r'=0$, then
$$
{\rm span}\{b_0,a_1,b_1,\ldots,a_i,b_i\}
$$
is a nonzero proper $\widetilde{\mathcal H}$-submodule of $V$.
Hence $V$ is reducible in either case.

\medskip
\noindent
{\it Case 2.}
Suppose that $\dim(V)=2D+1$ with $D\geq 1$.
The argument is similar to that in Case~1.

This completes the proof.
\end{proof}

Proposition~\ref{reducible condition} gives the following corollary.

\begin{corollary}\label{irreducible condition}
Let $V$ be a finite-dimensional $\widetilde{\mathcal H}$-module with an
adapted block basis.
If $V$ is irreducible, then, for each $i\in\{0,1\}$, both off-diagonal
entries of every two-dimensional $t_i$-block are nonzero.
\end{corollary}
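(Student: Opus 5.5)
This corollary is the contrapositive of Proposition~\ref{reducible condition}, so the plan is to invoke that proposition directly rather than redo any submodule construction. Suppose $V$ is irreducible and has an adapted block basis. Fix $i\in\{0,1\}$ and a two-dimensional $t_i$-block, say with respect to $\{a_j,b_j\}$ for $t_0$ or $\{b_j,a_{j+1}\}$ for $t_1$. If one of its off-diagonal entries were zero, Proposition~\ref{reducible condition} would give that $V$ is reducible. This contradicts the assumption, so both off-diagonal entries of every such block are nonzero.

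Two small points deserve a remark. First, in the cases $\dim(V)=1$ (and, for $t_0$, $\dim(V)=2$) there are no two-dimensional $t_i$-blocks, and the statement holds vacuously. This matches the first reduction in the proof of Proposition~\ref{reducible condition}. Second, the proposition's proof treats the odd-dimensional Case~2 only by analogy. If one wants the corollary fully justified, one should confirm that the analogous submodules exist there. For a zero upper-right entry of the $t_0$-block on $\{a_j,b_j\}$, the submodule is ${\rm span}\{b_j,a_{j+1},\ldots,a_D\}$. For a zero lower-left entry, it is ${\rm span}\{a_0,b_0,\ldots,a_j\}$. The $t_1$-blocks on $\{b_j,a_{j+1}\}$ are handled similarly. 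In each case Lemma~\ref{lem:submod-criterion} applies, because $t_1$ preserves $\mathbb Ca_0$ and $t_0$ preserves $\mathbb Ca_D$ by Definition~\ref{adapted basis}(2)(iii).

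No step is a real obstacle. The only care needed is to apply the proposition for each $i$ and each block separately, and to note the vacuous low-dimensional cases.
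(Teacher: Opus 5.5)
Your proposal is correct and follows the paper's route exactly: the paper states this corollary as an immediate consequence (the contrapositive) of Proposition~\ref{reducible condition} and gives no further argument. Your explicit odd-dimensional submodules are also correct: ${\rm span}\{b_j,a_{j+1},\ldots,a_D\}$ and ${\rm span}\{a_0,b_0,\ldots,a_j\}$ for the $t_0$-blocks, and their analogues for the $t_1$-blocks. They fill in Case~2, which the paper only treats as similar to Case~1.
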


\section{The $\widetilde{\mathcal{H}}$-modules $E_D$ and $O_D$}\label{Sec:ED,OD}

In this section, we construct two families of finite-dimensional 
$\widetilde{\mathcal{H}}$-modules and give sufficient conditions for their 
irreducibility. 
We begin with some notation.
For $c,r,s,\lambda,\mu \in \mathbb{C}^{\times}$, set $k(r)=r+r^{-1}$ and define
\begin{equation}
R(r,\lambda,c)=
\begin{pmatrix}
k(r)-\lambda^{-1}&-c\lambda^{-1}\\
c^{-1}(\lambda-k(r)+\lambda^{-1})&\lambda^{-1}
\end{pmatrix},
\qquad
P(c)=
\begin{pmatrix}
1&c\\
0&0
\end{pmatrix},
\label{local block R}
\end{equation}
\begin{equation}
S(r,s)=
\begin{pmatrix}
k(r)&s\\
-s^{-1}&0
\end{pmatrix},
\qquad
N(s,\mu)=
\begin{pmatrix}
0&0\\
s^{-1}\mu&0
\end{pmatrix}.
\label{local blocks PSN}
\end{equation}

\begin{lemma}\label{local block lemma}
Let $c,r,s,\lambda,\mu \in \mathbb{C}^{\times}$.
The following assertions hold.
\begin{enumerate}
\item[\rm(i)]
$P(c)^2=P(c)$ and $N(s,\mu)^2=0$.
\item[\rm(ii)]
$\operatorname{tr} R(r,\lambda,c)=k(r)$ and
$\det(R(r,\lambda,c))=1$.
\item[\rm(iii)]
$\operatorname{tr} S(r,s)=k(r)$ and
$\det(S(r,s))=1$.
\item[\rm(iv)]
$(R(r,\lambda,c)-rI)(R(r,\lambda,c)-r^{-1}I)=0$ and
$(S(r,s)-rI)(S(r,s)-r^{-1}I)=0$.
\item[\rm(v)]
$P(c)R(r,\lambda,c)=
\begin{pmatrix}
\lambda&0\\
0&0
\end{pmatrix}$.
\item[\rm(vi)]
$S(r,s)N(s,\mu)=
\begin{pmatrix}
\mu&0\\
0&0
\end{pmatrix}$.
\end{enumerate}
\end{lemma}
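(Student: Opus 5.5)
This lemma is a direct computation with $2\times 2$ matrices, and I would verify the items in order, relying on Cayley--Hamilton to shorten (iv).

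For (i), I compute $P(c)^2$ entrywise. The first row is $(1\cdot 1+c\cdot 0,\ 1\cdot c+c\cdot 0)=(1,c)$ and the second row is zero, so $P(c)^2=P(c)$. The matrix $N(s,\mu)$ is strictly lower triangular, so its square is zero.

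For (ii), the trace of $R(r,\lambda,c)$ is $k(r)-\lambda^{-1}+\lambda^{-1}=k(r)$. For the determinant I expand
$$
(k(r)-\lambda^{-1})\lambda^{-1}+c\lambda^{-1}\cdot c^{-1}(\lambda-k(r)+\lambda^{-1}).
$$
The factor $c$ cancels, and this becomes $k(r)\lambda^{-1}-\lambda^{-2}+1-k(r)\lambda^{-1}+\lambda^{-2}=1$.

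For (iii), the trace of $S(r,s)$ is $k(r)+0=k(r)$. Its determinant is $0-s\cdot(-s^{-1})=1$.

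For (iv), I use that every $2\times 2$ matrix $M$ satisfies $M^2-(\operatorname{tr}M)M+(\det M)I=0$. By (ii) and (iii), both $R(r,\lambda,c)$ and $S(r,s)$ satisfy $M^2-k(r)M+I=0$. Since $k(r)=r+r^{-1}$, we have
$$
M^2-k(r)M+I=(M-rI)(M-r^{-1}I),
$$
which gives both identities.

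For (v), the second row of $P(c)R(r,\lambda,c)$ is zero because the second row of $P(c)$ is zero. The first row is the first row of $R$ plus $c$ times its second row:
$$
\bigl(k(r)-\lambda^{-1}+\lambda-k(r)+\lambda^{-1},\ -c\lambda^{-1}+c\lambda^{-1}\bigr)=(\lambda,0).
$$

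For (vi), the only nonzero column of $N(s,\mu)$ is the first, namely $(0,s^{-1}\mu)^T$. Multiplying by $S(r,s)$ gives first column $(s\cdot s^{-1}\mu,\ 0)^T=(\mu,0)^T$, and the second column remains zero.

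None of these steps presents a real obstacle. The only place that needs care is the determinant in (ii), where the $k(r)\lambda^{-1}$ and $\lambda^{-2}$ terms must cancel exactly.
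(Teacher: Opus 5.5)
Your proposal is correct and follows the paper's proof: the paper likewise checks (i)--(iii), (v), and (vi) by direct computation and obtains (iv) from (ii), (iii), and Cayley--Hamilton via $X^2-k(r)X+I=(X-rI)(X-r^{-1}I)$. Your explicit entrywise computations, including the determinant cancellation in (ii), are accurate.
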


\begin{proof}
Assertions (i)--(iii), (v), and (vi) follow by direct computation.
By (ii), (iii), and the Cayley-Hamilton theorem, each matrix
$X\in\{R(r,\lambda,c),S(r,s)\}$
satisfies
$$
X^2-k(r)X+I=(X-rI)(X-r^{-1}I)=0.
$$
This proves (iv).
\end{proof}

\subsection{The even-dimensional $\widetilde{\mathcal{H}}$-modules $E_D$}\label{even subsec}

In this subsection, we construct a family of even-dimensional
$\widetilde{\mathcal{H}}$-modules and give sufficient conditions for their
irreducibility. 
Throughout this subsection, let $D\geq 1$.
Recall the nonzero scalars $r_0,r_1$, and choose
\begin{equation}
\rho_-,\rho_+ \in \{r_0,r_0^{-1}\}.
\label{even rho}
\end{equation}
Choose the following tuples of nonzero scalars
\begin{align}
&&  c & = (c_1,\dots,c_{D-1}), & s &= (s_0,\dots,s_{D-1}), && \label{even cs 1}\\
&&  \lambda & = (\lambda_1,\dots,\lambda_{D-1}), & \mu &= (\mu_0,\dots,\mu_{D-1}). &&\label{even lambda mu 2}
\end{align}
When $D=1$, the tuples $c$ and $\lambda$ are understood to be empty.
Let
$$
E_D:=E_D(\rho_-,\rho_+;c,s,\lambda,\mu)
$$
be a $2D$-dimensional vector space with an ordered basis
$$
\mathfrak{B}_E := \{b_0,a_1,b_1,\dots,a_{D-1},b_{D-1},a_D\}.
$$
Define matrices $T_0,U_0,T_1,U_1 \in \operatorname{Mat}_{2D}(\mathbb{C})$ by
\begin{align}
T_0&=\operatorname{blockdiag}
([\rho_-],R_1,\dots,R_{D-1},[\rho_+]),
\label{e.mat:T0}\\
U_0&=\operatorname{blockdiag}
([0],P_1,\dots,P_{D-1},[1]),
\label{e.mat:U0}\\
T_1&=\operatorname{blockdiag}
(S_0,S_1,\dots,S_{D-1}),
\label{e.mat:T1}\\
U_1&=\operatorname{blockdiag}
(N_0,N_1,\dots,N_{D-1}),
\label{e.mat:U1}
\end{align}
where
$$
R_i=R(r_0,\lambda_i,c_i),\qquad
P_i=P(c_i)
\qquad (1\leq i\leq D-1),
$$
and
$$
S_i=S(r_1,s_i),\qquad
N_i=N(s_i,\mu_i)
\qquad (0\leq i\leq D-1).
$$
When $D=1$, there are no blocks $R_i$ or $P_i$, and the above definitions reduce to
$$
T_0=\operatorname{diag}(\rho_-,\rho_+),\qquad
U_0=\operatorname{diag}(0,1),\qquad
T_1=S(r_1,s_0),\qquad
U_1=N(s_0,\mu_0).
$$

\begin{lemma}\label{nil-DAHA condition}
The matrices $T_0,U_0,T_1,U_1$ satisfy the following relations.
\begin{enumerate}
\item[\rm(i)] $(T_i-r_iI)(T_i-r_i^{-1}I)=0$ for $i\in\{0,1\}$.
\item[\rm(ii)] $U_0^2=U_0$.
\item[\rm(iii)] $U_1^2=0$.
\item[\rm(iv)] $U_0T_0T_1U_1=0=T_1U_1U_0T_0$.
\end{enumerate}
\end{lemma}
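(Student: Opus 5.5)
Since all four matrices are block diagonal, relations (i)--(iii) reduce to block-by-block verification via Lemma~\ref{local block lemma}. For (i) with $i=0$: the $1\times1$ blocks $[\rho_\pm]$ satisfy $(\rho_\pm-r_0)(\rho_\pm-r_0^{-1})=0$ because $\rho_\pm\in\{r_0,r_0^{-1}\}$ by \eqref{even rho}. The blocks $R_i=R(r_0,\lambda_i,c_i)$ satisfy the quadratic relation by Lemma~\ref{local block lemma}(iv). For $i=1$, each $S_i=S(r_1,s_i)$ satisfies it by the same part. For (ii), the blocks $[0]$ and $[1]$ are idempotent, and $P_i^2=P_i$ by Lemma~\ref{local block lemma}(i). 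For (iii), $N_i^2=0$ by Lemma~\ref{local block lemma}(i).

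Relation (iv) is the only place where care is needed, because $T_0,U_0$ and $T_1,U_1$ are block diagonal with respect to \emph{different} decompositions of $\mathfrak B_E$. The $t_0$-blocks are $\mathbb Cb_0$, $\mathrm{span}\{a_i,b_i\}$ and $\mathbb Ca_D$. The $t_1$-blocks are $\mathrm{span}\{b_i,a_{i+1}\}$. I would therefore not multiply the matrices directly. Instead, I would compute the products $U_0T_0$ and $T_1U_1$ within their own block structures, and show that each is diagonal in the basis $\mathfrak B_E$.

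For $U_0T_0$, Lemma~\ref{local block lemma}(v) gives $P_iR_i=\mathrm{diag}(\lambda_i,0)$. The end blocks give $[0][\rho_-]=[0]$ and $[1][\rho_+]=[\rho_+]$. Hence $U_0T_0=\mathrm{diag}(0,\lambda_1,0,\lambda_2,\dots,\lambda_{D-1},0,\rho_+)$. This matrix kills every $b_j$ and scales every $a_i$, with $\lambda_D:=\rho_+$. For $T_1U_1$, Lemma~\ref{local block lemma}(vi) gives $S_iN_i=\mathrm{diag}(\mu_i,0)$ on $\{b_i,a_{i+1}\}$. Hence $T_1U_1=\mathrm{diag}(\mu_0,0,\mu_1,0,\dots,\mu_{D-1},0)$, which scales every $b_j$ and kills every $a_i$.

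Both products are diagonal in the same basis $\mathfrak B_E$, and their supports are complementary: one is supported on the $a$'s, the other on the $b$'s. So $(U_0T_0)(T_1U_1)=0=(T_1U_1)(U_0T_0)$, which is exactly (iv). The case $D=1$ is covered by the same computation with no $R_i,P_i$ blocks: $U_0T_0=\mathrm{diag}(0,\rho_+)$ and $T_1U_1=\mathrm{diag}(\mu_0,0)$. The only step that is not purely routine is recognizing that (iv) should be checked through these diagonal forms rather than by direct matrix multiplication across mismatched blocks.
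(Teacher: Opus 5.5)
Your proposal is correct and follows essentially the same route as the paper: you check (i)--(iii) block by block via Lemma~\ref{local block lemma}, and for (iv) you use parts (v) and (vi) to see that $U_0T_0$ and $T_1U_1$ are diagonal in $\mathfrak{B}_E$ with nonzero diagonal entries in disjoint positions. Your extra observation that each product should be computed within its own block structure, because the $t_0$- and $t_1$-block decompositions differ, makes explicit a point the paper leaves implicit.
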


\begin{proof}
Assertions (i)--(iii) follow from Lemma~\ref{local block lemma} and \eqref{e.mat:T0}--\eqref{e.mat:U1}.
By Lemma~\ref{local block lemma}(v) and (vi), the matrices $U_0T_0$ and
$T_1U_1$ are diagonal, and their nonzero diagonal entries occur in
disjoint positions. 
Thus
$(U_0T_0)(T_1U_1)=0=(T_1U_1)(U_0T_0)$, 
which proves (iv).
\end{proof}

\begin{proposition}\label{prop:E_D}
There exists an $\widetilde{\mathcal{H}}$-module structure on $E_D$ such that
the matrices representing $t_0,u_0,t_1,u_1$ with respect to
$\mathfrak{B}_E$ are $T_0,U_0,T_1,U_1$, respectively.
\end{proposition}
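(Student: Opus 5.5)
The plan is to use the universal property of the presented algebra $\widetilde{\mathcal H}$. Since $\widetilde{\mathcal H}$ is the unital associative $\mathbb{C}$-algebra generated by $t_0,u_0,t_1,u_1$ subject only to relations (i)--(iv) of Definition~\ref{nil.H}, an algebra homomorphism $\widetilde{\mathcal H}\to\operatorname{Mat}_{2D}(\mathbb{C})$ exists as soon as we have four matrices satisfying those relations. Concretely, let $F$ be the free unital associative algebra on $t_0,u_0,t_1,u_1$. Define $\phi:F\to\operatorname{Mat}_{2D}(\mathbb{C})$ by
$$
t_0\mapsto T_0,\qquad u_0\mapsto U_0,\qquad t_1\mapsto T_1,\qquad u_1\mapsto U_1 .
$$
The two-sided ideal generated by the defining relations lies in the kernel of $\phi$ exactly when the matrices satisfy those relations.

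The key step is therefore Lemma~\ref{nil-DAHA condition}, which verifies precisely relations (i)--(iv) for $T_0,U_0,T_1,U_1$. Relation (i) is the quadratic relation blockwise: the $1\times1$ blocks $\rho_\pm\in\{r_0,r_0^{-1}\}$ satisfy it trivially, and the $R_i$ and $S_i$ satisfy it by Lemma~\ref{local block lemma}(iv). Relations (ii) and (iii) follow blockwise from Lemma~\ref{local block lemma}(i), together with $[0]^2=[0]$ and $[1]^2=[1]$. Relation (iv) holds because $U_0T_0$ and $T_1U_1$ are diagonal with disjoint supports. One small point needs checking, since the two block decompositions are staggered. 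In $U_0T_0$, the nonzero entries sit at the $a_i$ positions ($P_iR_i=\operatorname{diag}(\lambda_i,0)$, the $a_D$ entry is $\rho_+$, and the $b_0$ entry is $0$). In $T_1U_1$, the nonzero entries sit at the $b_i$ positions ($S_iN_i=\operatorname{diag}(\mu_i,0)$ on $\{b_i,a_{i+1}\}$).

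Hence $\phi$ factors through a homomorphism $\bar\phi:\widetilde{\mathcal H}\to\operatorname{Mat}_{2D}(\mathbb{C})$. Identifying $E_D$ with $\mathbb{C}^{2D}$ via $\mathfrak B_E$ then makes $E_D$ an $\widetilde{\mathcal H}$-module by $h\cdot v=\bar\phi(h)v$. By construction, $t_0,u_0,t_1,u_1$ act by $T_0,U_0,T_1,U_1$ with respect to $\mathfrak B_E$.

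There is no real obstacle beyond the lemma already established. The only care needed is the convention that the matrix of an operator has columns given by the images of basis vectors, so that composition of operators corresponds to matrix multiplication in the same order. This ensures the relations on matrices transfer to the operators. The degenerate case $D=1$ is covered by the same argument using the reduced matrices.
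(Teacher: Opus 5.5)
Your proposal is correct and matches the paper's proof: Lemma~\ref{nil-DAHA condition} shows that $T_0,U_0,T_1,U_1$ satisfy the defining relations, so there is an algebra homomorphism $\widetilde{\mathcal H}\to\operatorname{Mat}_{2D}(\mathbb{C})$, and identifying $E_D$ with $\mathbb{C}^{2D}$ via $\mathfrak B_E$ gives the module structure. Your blockwise check of relations (i)--(iv), including the disjoint-support argument for $U_0T_0$ and $T_1U_1$ under the staggered block decompositions, is the same as the paper's proof of that lemma.
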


\begin{proof}
By Lemma~\ref{nil-DAHA condition}, there is a unique
$\mathbb{C}$-algebra homomorphism $\widetilde{\mathcal{H}}\rightarrow
\operatorname{Mat}_{2D}(\mathbb{C})$
such that
$$
t_0 \mapsto T_0,\qquad
u_0  \mapsto U_0,\qquad
t_1  \mapsto T_1,\qquad
u_1 \mapsto U_1.
$$
Using the basis $\mathfrak{B}_E$ to identify
$\operatorname{Mat}_{2D}(\mathbb{C})$ with $\operatorname{End}(E_D)$,
this homomorphism endows $E_D$ with a
$\widetilde{\mathcal{H}}$-module structure.
\end{proof}

Recall the elements $A$ and $B$ of $\widetilde{\mathcal{H}}$ from \eqref{eq:A,B}.
The matrices representing $A$ and $B$ with respect to
$\mathfrak{B}_E$ are $U_0T_0$ and $T_1U_1$, respectively. 
For $D\geq 2$, Lemma~\ref{local block lemma}(v) and (vi) give
\begin{align}
U_0T_0 & = \operatorname{diag}
(0,\lambda_1,0,\lambda_2,0,\dots,\lambda_{D-1},0,\rho_+), \label{eq:mat.AB(1)}\\
T_1U_1 & = \operatorname{diag} \label{eq:mat.AB(2)}
(\mu_0,0,\mu_1,0,\dots,\mu_{D-1},0).
\end{align}
When $D=1$, these matrices reduce to
\begin{equation}\label{eq:mat.AB(3)}
U_0T_0=\operatorname{diag}(0,\rho_+),
\qquad
T_1U_1=\operatorname{diag}(\mu_0,0).
\end{equation}

\begin{proposition}\label{prop:E_D,abb}
The basis $\mathfrak{B}_E$ is an adapted block basis for $E_D$.
\end{proposition}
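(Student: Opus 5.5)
We verify the five conditions of Definition~\ref{adapted basis}(1) directly from the block-diagonal forms \eqref{e.mat:T0}--\eqref{e.mat:U1} and the diagonal forms \eqref{eq:mat.AB(1)}--\eqref{eq:mat.AB(3)}. Take the scalars in Definition~\ref{adapted basis}(1) to be $\lambda_1,\dots,\lambda_{D-1}$ from \eqref{even lambda mu 2}, together with $\lambda_D:=\rho_+$, and $\mu_0,\dots,\mu_{D-1}$ from \eqref{even lambda mu 2}. All of these are nonzero: the $\lambda_i$ and $\mu_i$ were chosen nonzero, and $\rho_+\in\{r_0,r_0^{-1}\}$ with $r_0\neq0$.

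For conditions (i) and (ii), the matrices representing $A$ and $B$ with respect to $\mathfrak B_E$ are $U_0T_0$ and $T_1U_1$. By \eqref{eq:mat.AB(1)} and \eqref{eq:mat.AB(2)} (or \eqref{eq:mat.AB(3)} when $D=1$), these are diagonal. Reading off the diagonal entries in the order $b_0,a_1,b_1,\dots,b_{D-1},a_D$ gives
\[
Aa_i=\lambda_i a_i,\quad Ba_i=0\quad(1\le i\le D),\qquad Ab_i=0,\quad Bb_i=\mu_i b_i\quad(0\le i\le D-1).
\]
Hence $a_i\in V_{\lambda_i,0}$ and $b_i\in V_{0,\mu_i}$.

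For conditions (iii)--(v), we use the block structure. By \eqref{e.mat:T0}, $T_0$ is block diagonal with respect to the partition
\[
\{b_0\},\ \{a_1,b_1\},\ \dots,\ \{a_{D-1},b_{D-1}\},\ \{a_D\}.
\]
This gives $t_0b_0=\rho_-b_0$ and $t_0a_D=\rho_+a_D$, and shows that $t_0$ preserves ${\rm span}\{a_i,b_i\}$ for $1\le i\le D-1$. By \eqref{e.mat:T1}, $T_1$ is block diagonal with $2\times2$ blocks $S_0,\dots,S_{D-1}$ in the consecutive positions
\[
\{b_0,a_1\},\ \{b_1,a_2\},\ \dots,\ \{b_{D-1},a_D\},
\]
so $t_1$ preserves ${\rm span}\{b_i,a_{i+1}\}$ for $0\le i\le D-1$.

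The only point needing care is the bookkeeping of positions. The blocks of $T_0$ and those of $T_1$ are offset by one position, and one has to check that this offset matches the ordering of $\mathfrak B_E$. One also has to treat $D=1$ separately: there, $T_0=\operatorname{diag}(\rho_-,\rho_+)$ and (iv) is vacuous.
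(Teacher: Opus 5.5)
Your proposal is correct and takes the same approach as the paper. The paper's proof just cites Definition~\ref{adapted basis}, the block forms \eqref{e.mat:T0} and \eqref{e.mat:T1}, and the diagonal forms \eqref{eq:mat.AB(1)}--\eqref{eq:mat.AB(3)}; your explicit choice $\lambda_D:=\rho_+$ and your check of the position bookkeeping fill in the details it leaves implicit.
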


\begin{proof}
This follows from Definition~\ref{adapted basis},
\eqref{e.mat:T0}, \eqref{e.mat:T1}, and
\eqref{eq:mat.AB(1)}--\eqref{eq:mat.AB(3)}.
\end{proof}

The following theorem gives sufficient conditions for $E_D$ to be
multiplicity-free and irreducible.

\begin{theorem}\label{even irreducible}
Assume that $\lambda_1,\dots,\lambda_{D-1}$ are mutually distinct and
$\lambda_i\notin\{r_0,r_0^{-1}\}$ for $1\leq i\leq D-1$, and that
$\mu_0,\dots,\mu_{D-1}$ are mutually distinct.
Then the $\widetilde{\mathcal{H}}$-module $E_D$ is multiplicity-free and irreducible.
\end{theorem}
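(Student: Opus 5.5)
The plan is to read multiplicity-freeness directly off the diagonal matrices \eqref{eq:mat.AB(1)}--\eqref{eq:mat.AB(3)}. For irreducibility, we first show that any submodule is spanned by a subset of $\mathfrak{B}_E$, and then propagate along the chain $b_0,a_1,b_1,\dots,a_D$ using the nonzero off-diagonal entries of the $t_0$- and $t_1$-blocks.

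\emph{Multiplicity-freeness.} By \eqref{eq:mat.AB(1)}--\eqref{eq:mat.AB(3)}, $A$ and $B$ are represented by diagonal matrices with respect to $\mathfrak{B}_E$, so both are diagonalizable. The joint eigenvalue pairs of the basis vectors are:
\begin{itemize}
\item $(\lambda_i,0)$ for $a_i$ with $1\le i\le D-1$;
\item $(\rho_+,0)$ for $a_D$;
\item $(0,\mu_i)$ for $b_i$ with $0\le i\le D-1$.
\end{itemize}
We check that these $2D$ pairs are mutually distinct. All $\lambda_i$ and $\mu_i$ are nonzero, and $\rho_+\neq 0$, so a pair of the form $(\ast,0)$ never equals a pair of the form $(0,\ast)$. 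The $\mu_i$ are distinct by hypothesis. The $\lambda_i$ are distinct by hypothesis, and they differ from $\rho_+$ because $\rho_+\in\{r_0,r_0^{-1}\}$ while $\lambda_i\notin\{r_0,r_0^{-1}\}$. Hence each joint eigenspace $V_{\alpha,\beta}$ is either zero or spanned by a single basis vector, so $E_D$ is multiplicity-free.

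\emph{Submodules are spanned by basis vectors.} Let $W$ be a nonzero submodule. Since $A$ and $B$ commute and are diagonalizable on $E_D$, their restrictions to the invariant subspace $W$ are simultaneously diagonalizable. Therefore
$$
W=\bigoplus_{\alpha,\beta}\bigl(W\cap V_{\alpha,\beta}\bigr).
$$
Each nonzero summand is a one-dimensional joint eigenspace, hence is spanned by a vector of $\mathfrak{B}_E$. So $W$ is spanned by a nonempty subset of $\mathfrak{B}_E$.

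\emph{Propagation along the chain.} We next show that the off-diagonal entries of every two-dimensional block are nonzero; this is the converse-type counterpart to Corollary~\ref{irreducible condition}.
\begin{itemize}
\item For $S_i=S(r_1,s_i)$, the off-diagonal entries are $s_i$ and $-s_i^{-1}$, which are nonzero.
\item For $R_i=R(r_0,\lambda_i,c_i)$, the entry $-c_i\lambda_i^{-1}$ is clearly nonzero.
\item The other entry of $R_i$ is $c_i^{-1}(\lambda_i-k(r_0)+\lambda_i^{-1})=c_i^{-1}\lambda_i^{-1}(\lambda_i-r_0)(\lambda_i-r_0^{-1})$. It is nonzero exactly because $\lambda_i\notin\{r_0,r_0^{-1}\}$.
\end{itemize}
The last point is the only place where that hypothesis enters irreducibility, and it is the one step needing care.

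Now suppose $W$ contains some basis vector $x$, and let $y$ be a neighbor of $x$ in the chain $b_0,a_1,b_1,\dots,a_D$. Then $x$ and $y$ lie in a common block $\operatorname{span}\{b_j,a_{j+1}\}$ for $t_1$, or $\operatorname{span}\{a_j,b_j\}$ for $t_0$. The image $t_1x$ or $t_0x$ has a nonzero coefficient on $y$, and $W$ is spanned by basis vectors, so $y\in W$. Iterating along the chain gives $W=E_D$. Hence $E_D$ is irreducible.
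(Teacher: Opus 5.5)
Your proof is correct and follows essentially the same route as the paper. You first reduce to submodules spanned by subsets of $\mathfrak{B}_E$ via multiplicity-freeness, then propagate along the chain $b_0,a_1,\dots,a_D$ using the nonzero off-diagonal entries of the blocks $R_i$ and $S_i$. Your explicit check that $\lambda_i\neq\rho_+$ (because $\rho_+\in\{r_0,r_0^{-1}\}$) fills in a detail the paper leaves as ``by the given conditions.''
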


\begin{proof}
Let $W$ be a nonzero $\widetilde{\mathcal{H}}$-submodule of $E_D$.
By the given conditions, the module $E_D$ is
$(A,B)$-multiplicity-free.
Since $W$ is invariant under $A$ and $B$,
Remark~\ref{rmk:ds} and
Proposition~\ref{prop:E_D,abb} imply that $W$ is spanned by a subset of
$\mathfrak{B}_E$.
In particular, $W$ contains $a_{i+1}$ or $b_i$ for some
$0\leq i\leq D-1$.

Suppose first that $b_i\in W$ for some $0\leq i\leq D-1$.
Since
$$
t_1b_i=k(r_1)b_i-s_i^{-1}a_{i+1}\in W,
$$
we have $a_{i+1}\in W$.
If $i\leq D-2$, then
$$
t_0a_{i+1}
=
\bigl(k(r_0)-\lambda_{i+1}^{-1}\bigr)a_{i+1}
+
c_{i+1}^{-1}
\bigl(\lambda_{i+1}-k(r_0)+\lambda_{i+1}^{-1}\bigr)b_{i+1}
\in W.
$$
The coefficient of $b_{i+1}$ is nonzero, since $c_{i+1}\neq0$ and
$$
\lambda_{i+1}-k(r_0)+\lambda_{i+1}^{-1}
=
\lambda_{i+1}^{-1}
(\lambda_{i+1}-r_0)(\lambda_{i+1}-r_0^{-1})
\neq0.
$$
Thus $b_{i+1}\in W$.
Repeating these steps as necessary gives
\begin{equation}\label{pf:bj,aj+1(1)}
b_j,a_{j+1}\in W
\qquad (i\leq j\leq D-1).
\end{equation}
If $i\geq1$, then
$t_0b_i = -c_i\lambda_i^{-1}a_i+\lambda_i^{-1}b_i \in W$.
Since $c_i\lambda_i^{-1}\neq0$, we have $a_i\in W$.
It follows that
$$
t_1a_i=s_{i-1}b_{i-1}\in W,
$$
and hence $b_{i-1}\in W$.
Repeating these steps as necessary gives
\begin{equation}\label{pf:bj,aj+1(2)}
b_j,a_{j+1}\in W
\qquad (0\leq j\leq i-1).
\end{equation}
By \eqref{pf:bj,aj+1(1)} and \eqref{pf:bj,aj+1(2)}, every vector in $\mathfrak{B}_E$ belongs to $W$, and therefore
$W=E_D$.

Next, suppose that $a_{i+1}\in W$ for some $0\leq i\leq D-1$.
Since $t_1a_{i+1}=s_ib_i\in W$, we have $b_i\in W$.
Thus $W=E_D$ by the preceding argument.
Consequently, $E_D$ is irreducible.
\end{proof}

\subsection{The odd-dimensional $\widetilde{\mathcal{H}}$-modules $O_D$}\label{subsec:O_D}

In this subsection, we construct a family of odd-dimensional
$\widetilde{\mathcal{H}}$-modules and give sufficient conditions for their
irreducibility.
Throughout this subsection, let $D\geq 0$.
Recall the nonzero scalars $r_0,r_1$, and choose
\begin{equation}
\rho \in \{r_0,r_0^{-1}\},\qquad
\sigma \in \{r_1,r_1^{-1}\}.
\label{odd rho}
\end{equation}
Choose the following tuples of nonzero scalars
\begin{align}
&& c &=(c_0,\dots,c_{D-1}), & s &=(s_0,\dots,s_{D-1}), && \label{odd cs}\\
&& \lambda &=(\lambda_0,\dots,\lambda_{D-1}), &
\mu &=(\mu_0,\dots,\mu_{D-1}). && \label{odd lambda mu}
\end{align}
When $D=0$, the tuples $c,s,\lambda,\mu$ are understood to be empty.
Let
$$
O_D:=O_D(\rho,\sigma;c,s,\lambda,\mu)
$$
be a $(2D+1)$-dimensional vector space.
For $D\geq 1$, let
$$
\mathfrak{B}_O
:=\{a_0,b_0,a_1,b_1,\dots,a_{D-1},b_{D-1},a_D\}
$$
be an ordered basis of $O_D$, and for $D=0$, let $\mathfrak{B}_O := \{a_0\}$.
Define matrices
$T_0,U_0,T_1,U_1\in\operatorname{Mat}_{2D+1}(\mathbb{C})$ by
\begin{align}
T_0&=\operatorname{blockdiag}
(R_0,R_1,\dots,R_{D-1},[\rho]),
\label{o.mat:T0}\\
U_0&=\operatorname{blockdiag}
(P_0,P_1,\dots,P_{D-1},[1]),
\label{o.mat:U0}\\
T_1&=\operatorname{blockdiag}
([\sigma],S_0,S_1,\dots,S_{D-1}),
\label{o.mat:T1}\\
U_1&=\operatorname{blockdiag}
([0],N_0,N_1,\dots,N_{D-1}),
\label{o.mat:U1}
\end{align}
where
$$
R_i=R(r_0,\lambda_i,c_i),\qquad
P_i=P(c_i)
\qquad (0\leq i\leq D-1),
$$
and
$$
S_i=S(r_1,s_i),\qquad
N_i=N(s_i,\mu_i)
\qquad (0\leq i\leq D-1).
$$
When $D=0$, there are no blocks $R_i$, $P_i$, $S_i$, or $N_i$, and the above definitions reduce to
$$
T_0=[\rho],\qquad
U_0=[1],\qquad
T_1=[\sigma],\qquad
U_1=[0].
$$

\begin{proposition}\label{prop:O_D}
There exists an $\widetilde{\mathcal{H}}$-module structure on $O_D$ such that
the matrices representing $t_0,u_0,t_1,u_1$ with respect to
$\mathfrak{B}_O$ are $T_0,U_0,T_1,U_1$, respectively.
\end{proposition}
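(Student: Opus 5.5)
The plan mirrors the proof of Proposition~\ref{prop:E_D}. We show that the matrices $T_0,U_0,T_1,U_1$ in \eqref{o.mat:T0}--\eqref{o.mat:U1} satisfy the defining relations (i)--(iv) of Definition~\ref{nil.H}. This gives a $\mathbb{C}$-algebra homomorphism $\widetilde{\mathcal H}\to\operatorname{Mat}_{2D+1}(\mathbb{C})$ with $t_0\mapsto T_0$, $u_0\mapsto U_0$, $t_1\mapsto T_1$, $u_1\mapsto U_1$. Identifying $\operatorname{Mat}_{2D+1}(\mathbb{C})$ with $\operatorname{End}(O_D)$ via $\mathfrak{B}_O$ then yields the module structure. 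The case $D=0$ is immediate: $(\rho-r_0)(\rho-r_0^{-1})=0$, $(\sigma-r_1)(\sigma-r_1^{-1})=0$, $1^2=1$, $0^2=0$, and both products in (iv) vanish because $U_1=[0]$. So we assume $D\geq1$.

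For relations (i)--(iii) we work blockwise. The matrix $T_0$ is block diagonal with blocks $R_i=R(r_0,\lambda_i,c_i)$ and $[\rho]$. By Lemma~\ref{local block lemma}(iv) each $R_i$ satisfies $(X-r_0I)(X-r_0^{-1}I)=0$, and $\rho\in\{r_0,r_0^{-1}\}$, so $T_0$ satisfies it too. Similarly $T_1$ has blocks $[\sigma]$ and $S_i=S(r_1,s_i)$, which handles $t_1$. Lemma~\ref{local block lemma}(i) gives $P_i^2=P_i$ and $N_i^2=0$; together with $[1]^2=[1]$ and $[0]^2=[0]$ this gives $U_0^2=U_0$ and $U_1^2=0$. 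Note that the block partitions for $T_0,U_0$ and for $T_1,U_1$ are each internally consistent, so the products can be computed blockwise.

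Relation (iv) is the only point needing care, because the block structures of $T_0$ and $T_1$ are staggered: $T_0$ uses $\{a_i,b_i\}$ and $\{a_D\}$, while $T_1$ uses $\{a_0\}$ and $\{b_i,a_{i+1}\}$. So we do not multiply $T_0$ and $T_1$ directly. Instead, as in Lemma~\ref{nil-DAHA condition}, we compute $U_0T_0$ and $T_1U_1$ separately. By Lemma~\ref{local block lemma}(v),
$$U_0T_0=\operatorname{diag}(\lambda_0,0,\lambda_1,0,\dots,\lambda_{D-1},0,\rho),$$
using $P_iR_i=\operatorname{diag}(\lambda_i,0)$ and $[1][\rho]=[\rho]$. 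By Lemma~\ref{local block lemma}(vi),
$$T_1U_1=\operatorname{diag}(0,\mu_0,0,\mu_1,0,\dots,\mu_{D-1},0),$$
using $[\sigma][0]=[0]$ and $S_iN_i=\operatorname{diag}(\mu_i,0)$. Both matrices are diagonal, and their nonzero entries sit in disjoint positions: the $a$-positions for $U_0T_0$ and the $b$-positions for $T_1U_1$. Hence $U_0T_0T_1U_1=0=T_1U_1U_0T_0$.

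The main obstacle is only the bookkeeping of index alignment, namely checking that the staggered block decompositions place the diagonal entries of $U_0T_0$ exactly at the $a_i$ and those of $T_1U_1$ exactly at the $b_i$. Once that is verified, everything else follows from Lemma~\ref{local block lemma}.
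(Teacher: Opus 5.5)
Your proposal is correct and follows essentially the same route as the paper. The paper's proof handles $D=0$ directly and, for $D\geq 1$, invokes the blockwise verification of Lemma~\ref{nil-DAHA condition} (including the observation that $U_0T_0$ and $T_1U_1$ are diagonal with nonzero entries in disjoint positions) followed by the algebra-homomorphism argument of Proposition~\ref{prop:E_D}, exactly as you do.
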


\begin{proof}
For $D=0$, the result follows immediately.
For $D \geq 1$, the result follows by the same arguments as in the proofs of
Lemma~\ref{nil-DAHA condition} and Proposition~\ref{prop:E_D}, using
\eqref{o.mat:T0}--\eqref{o.mat:U1}.
\end{proof}

For $D\geq 1$, the matrices representing $A$ and $B$ with respect to
$\mathfrak{B}_O$ are given as follows:
\begin{align*}
A: \quad &\operatorname{diag}
(\lambda_0,0,\lambda_1,0,\dots,\lambda_{D-1},0,\rho),\\
B: \quad &\operatorname{diag}
(0,\mu_0,0,\mu_1,\dots,0,\mu_{D-1},0).
\end{align*}
When $D=0$, these matrices are $[\rho]$ and $[0]$, respectively.

\begin{proposition}\label{prop:O_D,abb}
The basis $\mathfrak{B}_O$ is an adapted block basis for $O_D$.
\end{proposition}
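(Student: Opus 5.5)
The plan is to check the conditions of Definition~\ref{adapted basis}(2) one at a time against the explicit matrices \eqref{o.mat:T0}--\eqref{o.mat:U1} and the diagonal forms of $A$ and $B$ displayed just before the statement. This is the same approach as in Proposition~\ref{prop:E_D,abb}. Two cases are needed: $D=0$ and $D\geq 1$.

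\emph{Case $D=0$.} Here $O_0$ is one-dimensional and $A$ acts as $[\rho]$ and $B$ as $[0]$. Hence $a_0\in V_{\rho,0}$, and $\rho\in\{r_0,r_0^{-1}\}$ is nonzero because $r_0\neq 0$. So $\{a_0\}$ is an odd adapted block basis with $\lambda_0=\rho$.

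\emph{Case $D\geq 1$.} First I will set the scalars for Definition~\ref{adapted basis}(2). I take $\lambda_0,\dots,\lambda_{D-1}$ to be the given tuple $\lambda$, I set $\lambda_D:=\rho$, and I take $\mu_0,\dots,\mu_{D-1}$ to be the given tuple $\mu$. All of these are nonzero: the tuples were chosen in $\mathbb C^\times$, and $\rho\neq0$.

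Conditions (i) and (ii) come straight from the displayed diagonal matrices for $A$ and $B$. These were computed from Lemma~\ref{local block lemma}(v),(vi), which give $P_iR_i=\operatorname{diag}(\lambda_i,0)$ and $S_iN_i=\operatorname{diag}(\mu_i,0)$, together with $[1][\rho]=[\rho]$ and $[\sigma][0]=[0]$. Reading off the entries, $Aa_i=\lambda_ia_i$ and $Ba_i=0$, while $Ab_i=0$ and $Bb_i=\mu_ib_i$.

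For conditions (iii)--(v), I will match the block structure of $T_0$ and $T_1$ to the ordered basis $a_0,b_0,a_1,b_1,\dots,a_{D-1},b_{D-1},a_D$:
\begin{itemize}
\item The $2\times2$ blocks $R_i$ of $T_0$ occupy the positions of $\{a_i,b_i\}$ for $0\le i\le D-1$, and the final $1\times1$ block $[\rho]$ sits at $a_D$. This gives (iv) and the $t_0$ part of (iii).
\item For $T_1$, the leading block $[\sigma]$ sits at $a_0$. This shifts the following $2\times2$ blocks $S_i$ so that they occupy the positions of $\{b_i,a_{i+1}\}$ for $0\le i\le D-1$. This gives (v) and the $t_1$ part of (iii).
\end{itemize}

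No step is a real obstacle. The only care needed is the index bookkeeping: confirming that the offset created by the initial $1\times1$ block of $T_1$ (and by the final $1\times1$ block of $T_0$) makes the blocks line up exactly with the pairs required in Definition~\ref{adapted basis}(2), and that the total count is $2D+1$.
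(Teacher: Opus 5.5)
Your proposal is correct and follows the same approach as the paper. The paper just says the argument is analogous to Proposition~\ref{prop:E_D,abb}: read off conditions (i)--(v) of Definition~\ref{adapted basis}(2) from the block forms of $T_0,T_1$ and the diagonal matrices for $A$ and $B$. You do exactly this, and you also make explicit the separate $D=0$ case and the choice $\lambda_D=\rho$, which the paper leaves implicit.
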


\begin{proof}
Similar to the proof of Proposition~\ref{prop:E_D,abb}.
\end{proof}

The following theorem gives sufficient conditions for $O_D$ to be
multiplicity-free and irreducible.

\begin{theorem}\label{odd irreducible}
Assume that $\lambda_0,\dots,\lambda_{D-1}$ are mutually
distinct and $\lambda_i\notin\{r_0,r_0^{-1}\}$ for $0\leq i\leq D-1$, and that
$\mu_0,\dots,\mu_{D-1}$ are mutually distinct.
Then the $\widetilde{\mathcal{H}}$-module $O_D$ is multiplicity-free and irreducible.
\end{theorem}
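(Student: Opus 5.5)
The plan is to mirror the proof of Theorem~\ref{even irreducible}. The case $D=0$ is trivial: $O_0$ is one-dimensional, hence irreducible, and $A$, $B$ act as the scalars $\rho$ and $0$, so it is multiplicity-free. Assume now $D\geq1$.

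\emph{Multiplicity-freeness.} With respect to $\mathfrak{B}_O$, $A$ and $B$ are diagonal, with the diagonals displayed before Proposition~\ref{prop:O_D,abb}. Hence the joint eigenvalue pairs are $(\lambda_i,0)$ for $0\leq i\leq D-1$, $(\rho,0)$ for $a_D$, and $(0,\mu_i)$ for $0\leq i\leq D-1$. The $\lambda_i$ are mutually distinct and differ from $\rho\in\{r_0,r_0^{-1}\}$. The $\mu_i$ are mutually distinct and nonzero, so the pairs $(0,\mu_i)$ are distinct from each other and from the pairs $(\lambda_i,0)$ and $(\rho,0)$, since $\lambda_i\neq0$ and $\rho\neq0$. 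Therefore every joint eigenspace is one-dimensional. Now let $W\neq0$ be a submodule. By Remark~\ref{rmk:ds}, since $W$ is $A$- and $B$-invariant, $W$ is spanned by a subset of $\mathfrak{B}_O$.

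\emph{Propagation.} First, if $b_i\in W$, then $t_1b_i=k(r_1)b_i-s_i^{-1}a_{i+1}$, so $a_{i+1}\in W$. Second, if $a_{j}\in W$ with $j\leq D-1$, then $t_0a_j$ has $b_j$-coefficient $c_j^{-1}\lambda_j^{-1}(\lambda_j-r_0)(\lambda_j-r_0^{-1})$. This is nonzero by the hypothesis $\lambda_j\notin\{r_0,r_0^{-1}\}$, so $b_j\in W$. Alternating these two steps yields every basis vector with index at or beyond the starting one.

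Going downward: if $b_i\in W$, then $t_0b_i=-c_i\lambda_i^{-1}a_i+\lambda_i^{-1}b_i$, so $a_i\in W$. If $a_{i}\in W$ with $i\geq1$, then $t_1a_i=s_{i-1}b_{i-1}$, so $b_{i-1}\in W$. Alternating these steps reaches $a_0$.

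In either direction, starting from any basis vector in $W$ (an $a_j$ or a $b_j$), we obtain all of $\mathfrak{B}_O$. The only endpoint needing care is $a_D$: if $a_D\in W$ (and $D\geq1$), then $t_1a_D=s_{D-1}b_{D-1}\in W$, and we continue from there. Hence $W=O_D$.

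The only step requiring attention is checking that every off-diagonal coefficient used is nonzero; this is exactly where the hypotheses $c_i,s_i\neq0$ and $\lambda_i\notin\{r_0,r_0^{-1}\}$ enter. The extra blocks $[\sigma]$ on $a_0$ and $[\rho]$ on $a_D$ are never needed for propagation.
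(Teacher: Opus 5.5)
Your proposal is correct and takes the same approach as the paper, which proves the odd case by saying it is similar to Theorem~\ref{even irreducible}. You carry out that adaptation correctly, including two points specific to $O_D$: checking $\lambda_i\neq\rho$ for multiplicity-freeness, and using the $R_0$-block on $\{a_0,b_0\}$ for the downward step at $i=0$.
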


\begin{proof}
For $D=0$, the result follows immediately.
For $D\geq 1$, the proof is similar to that of
Theorem~\ref{even irreducible}.
\end{proof}

\section{Main Results}\label{Sec:MR}

In this section, we classify the multiplicity-free finite-dimensional irreducible $\widetilde{\mathcal{H}}$-modules with an adapted block basis.
To this end, we establish preliminary lemmas.

We begin with the even-dimensional case.
Recall the parameters $\rho_-,\rho_+$ and the tuples
$c,s,\lambda,\mu$ from
\eqref{even rho}--\eqref{even lambda mu 2}.
In the previous section, we constructed the
$\widetilde{\mathcal{H}}$-module
$E_D(\rho_-,\rho_+;c,s,\lambda,\mu)$.
For any of the tuples $c,s,\lambda,\mu$, we use ${\bf 1}$
to denote the tuple of the same length whose entries are all $1$.
The following lemma shows that the tuples $c$ and $s$ do not affect 
the isomorphism class of the module.

\begin{lemma}\label{even parameter}
For $D\geq 1$, the following assertions hold.
\begin{enumerate}
\item[\rm(i)]
The $\widetilde{\mathcal{H}}$-modules
$E_D(\rho_-,\rho_+;c,s,\lambda,\mu)$ and
$E_D(\rho_-,\rho_+;c',s,\lambda,\mu)$
are isomorphic for any tuple $c'$ of nonzero scalars.

\item[\rm(ii)]
The $\widetilde{\mathcal{H}}$-modules
$E_D(\rho_-,\rho_+;c,s,\lambda,\mu)$ and
$E_D(\rho_-,\rho_+;c,s',\lambda,\mu)$
are isomorphic for any tuple $s'$ of nonzero scalars.
\end{enumerate}
\end{lemma}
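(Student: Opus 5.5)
The plan is to exhibit an explicit diagonal change of basis and check that it intertwines the two sets of block matrices. Write the isomorphism as $\phi\colon E_D(\rho_-,\rho_+;c,s,\lambda,\mu)\to E_D(\rho_-,\rho_+;c',s',\lambda,\mu)$ with $\phi(b_i)=\beta_i b_i'$ and $\phi(a_i)=\alpha_i a_i'$ for nonzero scalars $\alpha_i,\beta_i$ to be determined. Since $\phi$ is bijective, it suffices to show that $\phi$ commutes with $t_0,u_0,t_1,u_1$. In matrix form this means $T_j'=\Delta T_j\Delta^{-1}$ and $U_j'=\Delta U_j\Delta^{-1}$, where $\Delta$ is the diagonal matrix of the $\alpha$'s and $\beta$'s. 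Conjugating a $2\times 2$ block $X$ by $\operatorname{diag}(x,y)$ multiplies the $(1,2)$ entry by $x/y$, multiplies the $(2,1)$ entry by $y/x$, and fixes the diagonal entries. The $1\times1$ blocks $[\rho_\pm]$, $[0]$, $[1]$ are unaffected.

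\textbf{Step (i), changing $c$.} In $R(r,\lambda,c)$ and $P(c)$ the parameter $c$ enters only through off-diagonal entries. The $(1,2)$ entry of each is proportional to $c$, and the $(2,1)$ entry of $R$ is proportional to $c^{-1}$. The diagonal entries do not involve $c$. Hence $\operatorname{diag}(x,y)R(r,\lambda,c)\operatorname{diag}(x,y)^{-1}=R(r,\lambda,cx/y)$, and similarly $P(c)\mapsto P(cx/y)$. On the $t_1$-side, $S(r,s)\mapsto S(r,sx/y)$ and $N(s,\mu)\mapsto N(sx/y,\mu)$. This works because the $(2,1)$ entry $s^{-1}\mu$ of $N$ scales by $y/x$, exactly as $s^{-1}$ does. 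The whole family is therefore closed under diagonal conjugation, and only the ratios change.

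\textbf{Choosing the scalars.} The $t_0$-blocks act on $\{a_i,b_i\}$ for $1\le i\le D-1$, and there we need $c_i\alpha_i/\beta_i=c_i'$. The $t_1$-blocks act on $\{b_i,a_{i+1}\}$ for $0\le i\le D-1$, and there we need $s_i\beta_i/\alpha_{i+1}=s_i'$. These conditions can be solved recursively along the chain $b_0,a_1,b_1,\dots,a_D$:
\begin{itemize}
\item set $\beta_0=1$ and $\alpha_{i+1}=\beta_i s_i/s_i'$;
\item set $\beta_i=\alpha_i c_i/c_i'$.
\end{itemize}
All chosen scalars are nonzero. For (i) take $s'=s$, and for (ii) take $c'=c$, or simply handle both at once.

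I expect no real obstacle. The only care needed is to check that the conjugation sends $N(s_i,\mu_i)$ to $N(s_i',\mu_i)$ with the same $\mu_i$, and that the diagonal entries of $R$ (which involve $\lambda_i$ and $k(r_0)$) are untouched. Both follow from the scaling rule above, so $\phi$ is an $\widetilde{\mathcal H}$-module isomorphism.
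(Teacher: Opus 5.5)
Your proposal is correct and follows essentially the same route as the paper: a diagonal rescaling of the adapted block basis, verified blockwise on $R$, $P$, $S$, $N$, with your scaling rule for conjugation by $\operatorname{diag}(x,y)$ handling all four block types. The only difference is cosmetic. You solve for the scalars recursively to pass directly from $(c,s)$ to $(c',s')$ in a single map. The paper instead treats (i) and (ii) separately, each by comparison with the all-ones tuple ${\bf 1}$, using the explicit products $\ell_i=c_1^{-1}\cdots c_i^{-1}$ and $k_i=s_0^{-1}\cdots s_{i-1}^{-1}$.
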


\begin{proof}
(i)
It suffices to show that, for any tuple
$c=(c_1,\dots,c_{D-1})$ of nonzero scalars, the
$\widetilde{\mathcal{H}}$-modules
$E_D(\rho_-,\rho_+;{\bf 1},s,\lambda,\mu)$ and
$E_D(\rho_-,\rho_+;c,s,\lambda,\mu)$
are isomorphic.
We abbreviate
$$
V=E_D(\rho_-,\rho_+;{\bf 1},s,\lambda,\mu),
\qquad
V'=E_D(\rho_-,\rho_+;c,s,\lambda,\mu).
$$
Let
$$
\{b_0,a_1,b_1,\dots,a_{D-1},b_{D-1},a_D\}
\quad \text{and} \quad
\{b_0',a_1',b_1',\dots,a_{D-1}',b_{D-1}',a_D'\}
$$
be the adapted block bases of $V$ and $V'$, respectively.
Set
$$
\ell_0=1,
\qquad
\ell_i=c_1^{-1}\cdots c_i^{-1}
\qquad (1\leq i\leq D-1).
$$
Define a linear map
$\varphi:V\to V'$
by
$$
a_i\mapsto \ell_{i-1}a_i'
\quad (1\leq i\leq D),
\qquad
b_i\mapsto \ell_i b_i'
\quad (0\leq i\leq D-1).
$$
Observe that $\varphi$ is a $\mathbb{C}$-vector space isomorphism
since each $\ell_i$ is nonzero.
We claim that $\varphi$ commutes with the actions of
$t_0,u_0,t_1,u_1$.

For $1\leq i\leq D-1$, consider the restriction of $\varphi$ to
$\operatorname{span}\{a_i,b_i\}$.
With respect to the ordered bases $\{a_i,b_i\}$ and $\{a_i',b_i'\}$,
the matrix representing this restriction is
$$
L_i=
\begin{pmatrix}
\ell_{i-1}&0\\
0&\ell_i
\end{pmatrix}.
$$
Note that the matrices representing the $t_0$- and $u_0$-actions on
$\operatorname{span}\{a_i,b_i\}$ in $V$ are
$R(r_0,\lambda_i,1)$ and $P(1)$, and the corresponding matrices on
$\operatorname{span}\{a_i',b_i'\}$ in $V'$ are
$R(r_0,\lambda_i,c_i)$ and $P(c_i)$.
Since $\ell_i=c_i^{-1}\ell_{i-1}$, a routine computation gives
$$
L_iR(r_0,\lambda_i,1)=R(r_0,\lambda_i,c_i)L_i,
\qquad
L_iP(1)=P(c_i)L_i.
$$
Therefore
$$
\varphi(t_0v)=t_0\varphi(v),
\qquad
\varphi(u_0v)=u_0\varphi(v)
$$
for every $v\in\operatorname{span}\{a_i,b_i\}$.
Moreover, on each of $\mathbb{C}b_0$ and $\mathbb{C}b_0'$, the matrices
representing the $t_0$- and $u_0$-actions are $[\rho_-]$ and $[0]$,
respectively.
On each of $\mathbb{C}a_D$ and $\mathbb{C}a_D'$, the matrices
representing the $t_0$- and $u_0$-actions are $[\rho_+]$ and $[1]$, 
respectively.
Thus $\varphi$ also commutes with the $t_0$- and $u_0$-actions on
$\mathbb{C}b_0$ and $\mathbb{C}a_D$.

Next, for $0\leq i\leq D-1$, consider the restriction of $\varphi$
to $\operatorname{span}\{b_i,a_{i+1}\}$.
With respect to the ordered bases $\{b_i,a_{i+1}\}$ and
$\{b_i',a_{i+1}'\}$, the matrix representing this restriction is
$\ell_iI$.
Note that the matrices representing the $t_1$- and $u_1$-actions on
$\operatorname{span}\{b_i,a_{i+1}\}$ in $V$ are
$S(r_1,s_i)$ and $N(s_i,\mu_i)$, and the corresponding matrices on
$\operatorname{span}\{b_i',a_{i+1}'\}$ in $V'$ are the same.
Since $\ell_iI$ commutes with both $S(r_1,s_i)$ and $N(s_i,\mu_i)$, we have
$$
\varphi(t_1v)=t_1\varphi(v),
\qquad
\varphi(u_1v)=u_1\varphi(v)
$$
for every $v\in\operatorname{span}\{b_i,a_{i+1}\}$.

Together with the above observations, the claim follows.
Thus $\varphi$ is an
$\widetilde{\mathcal{H}}$-module isomorphism.
This proves (i).

(ii)
It suffices to show that, for any tuple
$s=(s_0,\dots,s_{D-1})$ of nonzero scalars, the
$\widetilde{\mathcal{H}}$-modules
$E_D(\rho_-,\rho_+;c,{\bf 1},\lambda,\mu)$ and
$E_D(\rho_-,\rho_+;c,s,\lambda,\mu)$
are isomorphic.
We use an argument similar to that in (i).
Abbreviate
$$
W=E_D(\rho_-,\rho_+;c,{\bf 1},\lambda,\mu),
\qquad
W'=E_D(\rho_-,\rho_+;c,s,\lambda,\mu).
$$
Let
$$
\{b_0,a_1,b_1,\dots,a_{D-1},b_{D-1},a_D\}
\quad \text{and} \quad
\{b_0',a_1',b_1',\dots,a_{D-1}',b_{D-1}',a_D'\}
$$
be the adapted block bases of $W$ and $W'$, respectively.
Set
$$
k_0=1,
\qquad
k_i=s_0^{-1}\cdots s_{i-1}^{-1}
\qquad (1\leq i\leq D).
$$
Define a linear map
$\psi:W\to W'$
by
$$
a_i\mapsto k_i a_i'
\quad (1\leq i\leq D),
\qquad
b_i\mapsto k_i b_i'
\quad (0\leq i\leq D-1).
$$
Observe that $\psi$ is a $\mathbb{C}$-vector space isomorphism
since each $k_i$ is nonzero.
It remains to show that $\psi$ commutes with the actions of
$t_0,u_0,t_1,u_1$.

For $1\leq i\leq D-1$, the restriction of $\psi$ to
$\operatorname{span}\{a_i,b_i\}$ has representing matrix $k_iI$,
and the corresponding $t_0$- and $u_0$-blocks in both $W$ and $W'$
are $R(r_0,\lambda_i,c_i)$ and $P(c_i)$, respectively.
Thus $\psi$ commutes with the $t_0$- and $u_0$-actions on
$\operatorname{span}\{a_i,b_i\}$.
The same argument applies to $\mathbb{C}b_0$ and
$\mathbb{C}a_D$, so $\psi$ commutes with the $t_0$- and $u_0$-actions.

For $0\leq i\leq D-1$, since $k_{i+1}=k_is_i^{-1}$, the restriction
of $\psi$ to $\operatorname{span}\{b_i,a_{i+1}\}$ has representing
matrix $k_iQ_i$, where
$$
Q_i=
\begin{pmatrix}
1&0\\
0&s_i^{-1}
\end{pmatrix}.
$$
The corresponding $t_1$- and $u_1$-blocks in $W$ are
$S(r_1,1)$ and $N(1,\mu_i)$, and those in $W'$ are
$S(r_1,s_i)$ and $N(s_i,\mu_i)$.
A routine computation gives
$$
Q_iS(r_1,1)=S(r_1,s_i)Q_i,
\qquad
Q_iN(1,\mu_i)=N(s_i,\mu_i)Q_i.
$$
Thus $\psi$ commutes with the $t_1$- and $u_1$-actions.
Consequently, $\psi$ is an
$\widetilde{\mathcal{H}}$-module isomorphism.
This proves (ii).
\end{proof}

Next, we consider the odd-dimensional case.
Recall the parameters $\rho,\sigma$ and the tuples
$c,s,\lambda,\mu$ from
\eqref{odd rho}--\eqref{odd lambda mu}.
In Subsection~\ref{subsec:O_D}, we constructed the
$\widetilde{\mathcal{H}}$-module
$O_D(\rho,\sigma;c,s,\lambda,\mu)$.
The following lemma shows that the tuples $c$ and $s$ do not affect
the isomorphism class of the module.

\begin{lemma}\label{odd parameter}
For $D\geq 0$, the following assertions hold.
\begin{enumerate}
\item[\rm(i)]
The $\widetilde{\mathcal{H}}$-modules
$O_D(\rho,\sigma;c,s,\lambda,\mu)$ and
$O_D(\rho,\sigma;c',s,\lambda,\mu)$
are isomorphic for any tuple $c'$ of nonzero scalars.
\item[\rm(ii)]
The $\widetilde{\mathcal{H}}$-modules
$O_D(\rho,\sigma;c,s,\lambda,\mu)$ and
$O_D(\rho,\sigma;c,s',\lambda,\mu)$
are isomorphic for any tuple $s'$ of nonzero scalars.
\end{enumerate}
\end{lemma}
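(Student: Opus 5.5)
The plan is to mirror the proof of Lemma~\ref{even parameter} with diagonal rescalings of the basis $\mathfrak{B}_O$, adapted to the odd block pattern. For $D=0$ there are no tuples and nothing to prove, so assume $D\geq 1$. In each part it suffices to compare the module whose tuple is ${\bf 1}$ with the module whose tuple is arbitrary. Let $\{a_0,b_0,\dots,b_{D-1},a_D\}$ and $\{a_0',b_0',\dots,b_{D-1}',a_D'\}$ be the bases $\mathfrak{B}_O$ of the two modules. The one structural difference from the even case is that the $t_0$-blocks now pair $\{a_i,b_i\}$ for $0\leq i\leq D-1$, with $a_D$ fixed. The $t_1$-blocks pair $\{b_i,a_{i+1}\}$ for $0\leq i\leq D-1$, with $a_0$ fixed, where $t_1$ acts by $[\sigma]$ and $u_1$ by $[0]$.

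For (i), set $\ell_{-1}=1$ and $\ell_i=c_0^{-1}\cdots c_i^{-1}$ for $0\leq i\leq D-1$. Define $\varphi$ by $a_i\mapsto \ell_{i-1}a_i'$ for $0\leq i\leq D$ and $b_i\mapsto \ell_i b_i'$ for $0\leq i\leq D-1$.
\begin{itemize}
\item On ${\rm span}\{a_i,b_i\}$ the map $\varphi$ is $L_i={\rm diag}(\ell_{i-1},\ell_i)$, and $\ell_i=c_i^{-1}\ell_{i-1}$. The same routine computation as before gives $L_iR(r_0,\lambda_i,1)=R(r_0,\lambda_i,c_i)L_i$ and $L_iP(1)=P(c_i)L_i$. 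One only checks the off-diagonal entries: the $(1,2)$ entry scales by $c_i$ and the $(2,1)$ entry by $c_i^{-1}$.
\item On $\mathbb{C}a_D$, $\varphi$ is a scalar and the actions there are $[\rho]$ and $[1]$, so they commute.
\item On ${\rm span}\{b_i,a_{i+1}\}$, $\varphi$ is $\ell_iI$, which commutes with the unchanged $S_i$ and $N_i$.
\item On $\mathbb{C}a_0$, $\varphi$ is a scalar, which commutes with $[\sigma]$ and $[0]$.
\end{itemize}

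For (ii), set $k_0=1$ and $k_i=s_0^{-1}\cdots s_{i-1}^{-1}$ for $1\leq i\leq D$. Define $\psi$ by $a_i\mapsto k_ia_i'$ for $0\leq i\leq D$ and $b_i\mapsto k_ib_i'$ for $0\leq i\leq D-1$.
\begin{itemize}
\item On ${\rm span}\{a_i,b_i\}$, $\psi$ is $k_iI$, which commutes with the unchanged $R_i$ and $P_i$.
\item On $\mathbb{C}a_D$ and $\mathbb{C}a_0$, $\psi$ is a scalar, so it commutes with the $1\times1$ blocks.
\item On ${\rm span}\{b_i,a_{i+1}\}$, $\psi$ is $k_iQ_i$ with $Q_i={\rm diag}(1,s_i^{-1})$. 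The identities $Q_iS(r_1,1)=S(r_1,s_i)Q_i$ and $Q_iN(1,\mu_i)=N(s_i,\mu_i)Q_i$ were already verified in Lemma~\ref{even parameter}(ii).
\end{itemize}

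Both maps are invertible because every scaling factor is nonzero. Since each commutes with the actions of $t_0,u_0,t_1,u_1$ blockwise, each is an $\widetilde{\mathcal H}$-module isomorphism.

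The main care point is the indexing, not the algebra. The index shift must be arranged so that each $t_0$-block and each $t_1$-block receives the correct pair of scalars. In particular, the extra vector $a_0$ must get the scalar that makes the $\{a_0,b_0\}$ block behave like $\{a_i,b_i\}$, namely $\ell_{-1}=1$ in (i) and $k_0$ in (ii). The $2\times2$ conjugation identities are the same as in the even case, so no new computation is needed.
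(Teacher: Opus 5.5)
Your proposal is correct and is exactly the adaptation the paper intends when it says the proof is ``similar to Lemma~\ref{even parameter}''. The diagonal rescalings $\varphi$ (with $\ell_{-1}=1$, $\ell_i=c_0^{-1}\cdots c_i^{-1}$) and $\psi$ (with $k_i=s_0^{-1}\cdots s_{i-1}^{-1}$) intertwine every block, including the extra $1\times 1$ blocks on $a_0$ and $a_D$, so your writeup simply supplies the details the paper leaves implicit.
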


\begin{proof}
It is similar to Lemma~\ref{even parameter}.
\end{proof}

We now define the parameter sets that will be used to state our main result.
For $D\geq 1$, let $\mathcal{E}_D$ be the set of tuples of parameters
\begin{center}
$(\rho_-,\rho_+;\lambda,\mu)$
\end{center}
satisfying the following conditions:
\begin{itemize}
\item the parameters $\rho_-,\rho_+$ and the tuples $\lambda,\mu$
are as in \eqref{even rho} and \eqref{even lambda mu 2}, respectively;
\item $\lambda_1,\dots,\lambda_{D-1}
\in \mathbb{C}^{\times}\setminus\{r_0,r_0^{-1}\}$
are mutually distinct; 
\item $\mu_0,\mu_1,\dots,\mu_{D-1}\in\mathbb{C}^{\times}$
are mutually distinct.
\end{itemize}

Let $\mathcal{O}_0$ be the set of pairs $(\rho,\sigma)$,
where $\rho,\sigma$ are as in \eqref{odd rho}.
For $D\geq 1$, let $\mathcal{O}_D$ be the set of tuples of parameters
\begin{center}
$(\rho,\sigma;\lambda,\mu)$
\end{center}
satisfying the following conditions:
\begin{itemize}
\item the parameters $\rho,\sigma$ and the tuples $\lambda,\mu$
are as in \eqref{odd rho} and \eqref{odd lambda mu}, respectively;
\item $\lambda_0,\lambda_1,\dots,\lambda_{D-1}
\in \mathbb{C}^{\times}\setminus\{r_0,r_0^{-1}\}$
are mutually distinct; 
\item $\mu_0,\mu_1,\dots,\mu_{D-1}\in\mathbb{C}^{\times}$
are mutually distinct.
\end{itemize}

The following theorem is the main result of the paper.

\begin{theorem}\label{thm:main}
The following assertions hold.
\begin{enumerate}
\item[\rm (i)]
For $D\geq 1$, there is a one-to-one correspondence between the elements of
$\mathcal{E}_D$ and the isomorphism classes of the multiplicity-free
$2D$-dimensional irreducible $\widetilde{\mathcal{H}}$-modules
with an adapted block basis.

\item[\rm (ii)]
For $D\geq 0$, there is a one-to-one correspondence between the elements of
$\mathcal{O}_D$ and the isomorphism classes of the multiplicity-free
$(2D+1)$-dimensional irreducible $\widetilde{\mathcal{H}}$-modules
with an adapted block basis.
\end{enumerate}
\end{theorem}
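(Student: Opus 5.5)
The map sends $(\rho_-,\rho_+;\lambda,\mu)\in\mathcal E_D$ to the isomorphism class of $E_D(\rho_-,\rho_+;\mathbf 1,\mathbf 1,\lambda,\mu)$, and similarly for $\mathcal O_D$. By Theorems~\ref{even irreducible} and~\ref{odd irreducible} and Propositions~\ref{prop:E_D,abb} and~\ref{prop:O_D,abb}, these modules are multiplicity-free, irreducible and have an adapted block basis, so the map is well defined. I will write the proof for (i); (ii) is analogous, with $\rho$ and $\sigma$ read off from the actions of $t_0$ on $\mathbb Ca_D$ and of $t_1$ on $\mathbb Ca_0$. The case $D=0$ is immediate, since $t_0=[\rho]$, $t_1=[\sigma]$, $u_0=[1]$ and $u_1=[0]$ (use $u_0=At_0^{-1}$ with $A=\lambda_0$, which forces $\lambda_0=\rho$).

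\emph{Injectivity.} Suppose $E_D(\rho_-,\rho_+;\lambda,\mu)\cong E_D(\rho_-',\rho_+';\lambda',\mu')$. The nonzero eigenvalues of $A$ form the set $\{\lambda_1,\dots,\lambda_{D-1},\rho_+\}$, and those of $B$ form the set $\{\mu_0,\dots,\mu_{D-1}\}$. Since these are only unordered sets, the ordering must be recovered intrinsically. The $t_1$-blocks pair each $B$-eigenvector $b_i$ with the $A$-eigenvector $a_{i+1}$, and the $t_0$-blocks pair $a_i$ with $b_i$; by Corollary~\ref{irreducible condition} all off-diagonal entries are nonzero, so these pairings are detected by the module itself. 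Concretely, for joint eigenvectors $v,w$, the pairing relation is that the $w$-coordinate of $t_jv$ is nonzero, and this is isomorphism-invariant because joint eigenspaces are one-dimensional. The resulting chain $b_0-a_1-b_1-\cdots-a_D$ is a path whose endpoints are the $B$-eigenvector $b_0$ and the $A$-eigenvector $a_D$, so its orientation is canonical. Reading off the eigenvalues along the path recovers the ordered tuples $\lambda$ and $\mu$. Finally, $\rho_+$ is the eigenvalue of $t_0$ on the endpoint $a_D$, and $\rho_-$ is the eigenvalue of $t_0$ on $b_0$.

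\emph{Surjectivity.} This is the main step. Let $V$ be multiplicity-free and irreducible of dimension $2D$, with an even adapted block basis $b_0,a_1,\dots,a_D$ and associated scalars $\lambda_i,\mu_i$.
\begin{enumerate}
\item[(a)] Multiplicity-freeness makes the $\mu_i$ mutually distinct. It also makes the values $\lambda_1,\dots,\lambda_D$ mutually distinct.
\item[(b)] The one-dimensional $t_0$-blocks are $t_0b_0=\rho_-b_0$ and $t_0a_D=\rho_+a_D$, with $\rho_\pm\in\{r_0^{\pm1}\}$. Since $u_0=At_0^{-1}$, we get $u_0b_0=0$ and $u_0a_D=\lambda_D\rho_+^{-1}a_D$. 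Then $u_0^2=u_0$ together with $\lambda_D\ne0$ forces $\lambda_D=\rho_+$.
\item[(c)] For $1\le i\le D-1$, write $t_0|_{\mathrm{span}\{a_i,b_i\}}=X=\begin{pmatrix}p&q\\ r&s\end{pmatrix}$. Since $A=u_0t_0$, the restriction of $u_0$ is $\mathrm{diag}(\lambda_i,0)X^{-1}$, using $X^{-1}=k(r_0)I-X$. Imposing $u_0^2=u_0$ gives $\lambda_i\bigl(k(r_0)-p\bigr)=1$, so $p=k(r_0)-\lambda_i^{-1}$. The trace condition then gives $s=\lambda_i^{-1}$, and the determinant condition gives $-qr=\lambda_i^{-1}(\lambda_i-k(r_0)+\lambda_i^{-1})$. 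Corollary~\ref{irreducible condition} says $q,r\ne0$, so $\lambda_i\notin\{r_0^{\pm1}\}$; this also yields $\lambda_i\ne\lambda_D$ independently of (a). Setting $c_i=-q\lambda_i$, the block is exactly $R(r_0,\lambda_i,c_i)$, and $u_0$ is $P(c_i)$.
\item[(d)] For the $t_1$-blocks, write $Y=t_1|_{\mathrm{span}\{b_i,a_{i+1}\}}$. Then $u_1=Y^{-1}\mathrm{diag}(\mu_i,0)$, and $u_1^2=0$ forces the $(1,1)$-entry of $Y^{-1}$ to vanish, i.e.\ the $(2,2)$-entry of $Y$ is $0$. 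Trace and determinant then give $Y=S(r_1,s_i)$ with $s_i\ne0$ the upper-right entry, and $u_1=N(s_i,\mu_i)$.
\end{enumerate}
Hence $V\cong E_D(\rho_-,\rho_+;c,s,\lambda,\mu)$, where $\lambda=(\lambda_1,\dots,\lambda_{D-1})$ and $\lambda_D=\rho_+$. Lemma~\ref{even parameter} then gives $V\cong E_D(\rho_-,\rho_+;\mathbf 1,\mathbf 1,\lambda,\mu)$, and the conditions verified above show that $(\rho_-,\rho_+;\lambda,\mu)\in\mathcal E_D$. The main obstacle is the block-by-block computation in (c) and (d), which forces the normal forms $R$, $P$, $S$, $N$ and in particular the exclusion $\lambda_i\notin\{r_0^{\pm1}\}$.
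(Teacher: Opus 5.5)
Your proof is correct, and surjectivity follows the paper essentially step for step, but your injectivity argument is organized differently.

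\textbf{Surjectivity.} The paper also reads off $\rho_\pm$ from the $t_0$-stable lines and gets $\lambda_D=\rho_+$ from $u_0^2=u_0$. It computes the $u_0$- and $u_1$-blocks as $A_iT_{0,i}^{-1}$ and $T_{1,i}^{-1}B_i$, and uses idempotence, nilpotence, trace and determinant to force the normal forms $R,P,S,N$. It extracts $\lambda_i\notin\{r_0^{\pm1}\}$ from the nonvanishing of the off-diagonal product, and finishes with Lemma~\ref{even parameter}.

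One ordering point in (c): the identities $\operatorname{tr}=k(r_0)$ and $\det=1$ hold only because the block is non-scalar. For example, $X=r_0I$ with $\lambda_i=r_0$ satisfies all the relations but has trace $2r_0$. So the appeal to Corollary~\ref{irreducible condition} should come before you use trace and determinant, not after.

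\textbf{Injectivity.} The paper fixes an isomorphism $\psi$ and notes that $\mathbb{C}b_0'$ is the only $t_0$-stable $B$-eigenline, so $\psi(b_0)=\gamma b_0'$. It then walks down the chain using the normalized blocks $S(r_1,1)$ and $R(r_0,\lambda_{i+1},1)$ to get $\psi(a_{i+1})=\gamma a_{i+1}'$ and $\psi(b_{i+1})=\gamma b_{i+1}'$, comparing eigenvalues at each step.

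You instead attach to the module an isomorphism invariant: the $t_0/t_1$-labelled path on the one-dimensional joint eigenspaces. You then read the ordered tuples off this path. This is the same walk in more intrinsic form. It does not need the normalization $c=s=\mathbf 1$ or any bookkeeping with $\psi$. The paper's version gives a little more, namely that $\psi$ is a single scalar on the whole chain.

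\textbf{The odd case.} When you say (ii) is analogous, your orientation argument does not carry over literally. In $O_D$ both ends of the path, $\mathbb{C}a_0$ and $\mathbb{C}a_D$, are $A$-eigenlines, so ``endpoints of different types'' no longer orients it. Orient it instead by noting that $\mathbb{C}a_0$ is the $t_1$-stable end and $\mathbb{C}a_D$ is the $t_0$-stable end. Equivalently, the edges at the two ends carry different labels. Alternatively, the $A$-eigenvalue on $a_D$ is $\rho\in\{r_0^{\pm1}\}$, while $\lambda_0\notin\{r_0^{\pm1}\}$.
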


\begin{proof}
(i)
Let $\mathfrak{A}$ be the set of isomorphism classes of the
multiplicity-free $2D$-dimensional irreducible
$\widetilde{\mathcal{H}}$-modules with an adapted block basis.
Define $\varphi:\mathcal{E}_D \to \mathfrak{A}$ by
$$
(\rho_-,\rho_+;\lambda,\mu)
\mapsto
[(\rho_-,\rho_+;\lambda,\mu)],
$$
where $[(\rho_-,\rho_+;\lambda,\mu)]$ is the isomorphism class containing
$E_D(\rho_-,\rho_+;{\bf 1},{\bf 1},\lambda,\mu)$.
By Proposition~\ref{prop:E_D,abb} and
Theorem~\ref{even irreducible}, the map $\varphi$ is well-defined.
We show that $\varphi$ is bijective.

First, we show that $\varphi$ is surjective.
Let $\mathcal{V}\in\mathfrak{A}$ and choose
$V\in\mathcal{V}$ with an adapted block basis
$\{b_0,a_1,b_1,\dots,a_{D-1},b_{D-1},a_D\}$.
For each $0\leq i\leq D-1$, there exist
$\lambda_{i+1},\mu_i\in\mathbb{C}^{\times}$ such that
$a_{i+1}\in V_{\lambda_{i+1},0}$ and $b_i\in V_{0,\mu_i}$.
We now determine the actions of $t_0$ and $u_0$ on the basis vectors $b_0$ and $a_D$.
Since $t_0$ preserves $\mathbb{C}b_0$, there exists
$\rho_-\in\mathbb{C}^{\times}$ such that
$t_0b_0=\rho_-b_0$.
By Definition~\ref{nil.H}(i), $(t_0-r_0)(t_0-r_0^{-1})=0$, and hence
$\rho_-\in\{r_0,r_0^{-1}\}$.
Similarly, there exists $\rho_+\in\{r_0,r_0^{-1}\}$ such that
$t_0a_D=\rho_+a_D$.
Observe that
$$
\lambda_Da_D=Aa_D=u_0t_0a_D=\rho_+u_0a_D.
$$
Applying $u_0$ to this equation and using $u_0^2=u_0$, we obtain
$\lambda_Du_0a_D=\lambda_Da_D$.
Since $\lambda_D\neq0$, it follows that $u_0a_D=a_D$, and hence
$\lambda_D=\rho_+$.
Also, since $Ab_0=0$ and $t_0b_0=\rho_-b_0$, we obtain
$u_0b_0=0$.

For $1\leq i\leq D-1$, let
$$
T_{0,i}=
\begin{pmatrix}
x_i&y_i\\
z_i&w_i
\end{pmatrix}
$$
be the two-dimensional $t_0$-block with respect to
$\{a_i,b_i\}$.
Since $V$ is irreducible, both $y_i$ and $z_i$ are nonzero by
Corollary~\ref{irreducible condition}.
Observe that the characteristic polynomial of $T_{0,i}$ is
$X^2-k(r_0)X+1$, so
\begin{equation}\label{T0,i:tr,det}
\operatorname{tr}(T_{0,i})=k(r_0),
\qquad
\det(T_{0,i})=1.
\end{equation}
Note that the two-dimensional $A$-block with respect to
$\{a_i,b_i\}$ is
$$
A_i=
\begin{pmatrix}
\lambda_i&0\\
0&0
\end{pmatrix}.
$$
Let $U_{0,i}$ be the two-dimensional $u_0$-block with respect to
$\{a_i,b_i\}$.
Then
$$
U_{0,i}
=(U_{0,i}T_{0,i})T_{0,i}^{-1}
=A_iT_{0,i}^{-1}
=
\begin{pmatrix}
\lambda_iw_i&-\lambda_i y_i\\
0&0
\end{pmatrix}.
$$
Since $U_{0,i}^2=U_{0,i}$ and $y_i\neq0$, we have
$\lambda_iw_i=1$, and hence
$w_i=\lambda_i^{-1}$.
Using this and \eqref{T0,i:tr,det}, we obtain $x_i=k(r_0)-\lambda_i^{-1}$
and
\begin{equation}
y_iz_i=-\lambda_i^{-1}
(\lambda_i-k(r_0)+\lambda_i^{-1})
= -\lambda_i^{-2}(\lambda_i-r_0)(\lambda_i-r_0^{-1}).
\label{lambda neq r_0}
\end{equation}
Set $c_i=-\lambda_i y_i$, which is nonzero.
It follows that
$$
T_{0,i}=R(r_0,\lambda_i,c_i),
\qquad
U_{0,i}=P(c_i),
$$
where $R(r_0,\lambda_i,c_i)$ and $P(c_i)$ are in
\eqref{local block R}.

For $0\leq i\leq D-1$, let
$$
T_{1,i}=
\begin{pmatrix}
\tilde{x}_i&\tilde{y}_i\\
\tilde{z}_i&\tilde{w}_i
\end{pmatrix}
$$
be the two-dimensional $t_1$-block with respect to
$\{b_i,a_{i+1}\}$.
Since $V$ is irreducible, both $\tilde{y}_i$ and $\tilde{z}_i$ are nonzero
by Corollary~\ref{irreducible condition}.
The characteristic polynomial of $T_{1,i}$ is
$X^2-k(r_1)X+1$, so
\begin{equation}\label{T1,i:tr,det}
\operatorname{tr}(T_{1,i})=k(r_1),
\qquad
\det(T_{1,i})=1.
\end{equation}
Note that the two-dimensional $B$-block with respect to
$\{b_i,a_{i+1}\}$ is
$$
B_i=
\begin{pmatrix}
\mu_i&0\\
0&0
\end{pmatrix}.
$$
Let $U_{1,i}$ be the two-dimensional $u_1$-block with respect to
$\{b_i,a_{i+1}\}$.
Then
$$
U_{1,i}
=T_{1,i}^{-1}(T_{1,i}U_{1,i})
=T_{1,i}^{-1}B_i
=
\begin{pmatrix}
\tilde{w}_i\mu_i&0\\
-\tilde{z}_i\mu_i&0
\end{pmatrix}.
$$
Since $U_{1,i}^2=0$ and $\mu_i\neq0$, we have
$\tilde{w}_i=0$.
Using this and \eqref{T1,i:tr,det}, we obtain
$\tilde{x}_i=k(r_1)$ and $\tilde{y}_i\tilde{z}_i=-1$.
Set $s_i=\tilde{y}_i$, which is nonzero.
It follows that
$$
T_{1,i}=S(r_1,s_i),
\qquad
U_{1,i}=N(s_i,\mu_i),
$$
where $S(r_1,s_i)$ and $N(s_i,\mu_i)$ are in
\eqref{local blocks PSN}.

Using the parameters obtained above, set
$$
c=(c_1,\dots,c_{D-1}),
\qquad
s=(s_0,\dots,s_{D-1}),
$$
and
$$
\lambda=(\lambda_1,\dots,\lambda_{D-1}),
\qquad
\mu=(\mu_0,\dots,\mu_{D-1}).
$$
Combining the above comments, we find that the $\widetilde{\mathcal H}$-module $V$ is isomorphic to
$E_D(\rho_-,\rho_+;c,s,\lambda,\mu)$, which is isomorphic to
$E_D(\rho_-,\rho_+;{\bf 1},{\bf 1},\lambda,\mu)$
by Lemma~\ref{even parameter}.

Set ${\bf v}=(\rho_-,\rho_+;\lambda,\mu)$.
For $1\leq i\leq D-1$, since $y_i z_i\neq0$,
\eqref{lambda neq r_0} implies that
$\lambda_i\in\mathbb{C}^{\times}\setminus\{r_0,r_0^{-1}\}$.
Since $V$ is multiplicity-free,
$\lambda_1,\dots,\lambda_{D-1}$ are mutually distinct, and
$\mu_0,\dots,\mu_{D-1}$ are mutually distinct.
Together with
$\rho_-,\rho_+\in\{r_0,r_0^{-1}\}$, this shows that
${\bf v}\in\mathcal{E}_D$.
Since $V\in\mathcal{V}$ and $V$ is isomorphic to
$E_D(\rho_-,\rho_+;{\bf 1},{\bf 1},\lambda,\mu)$, we have
$$
\varphi({\bf v})
=
[(\rho_-,\rho_+;\lambda,\mu)]
=
\mathcal{V}.
$$
Thus $\varphi$ is surjective.

We next show that $\varphi$ is injective.
Let
${\bf v}=(\rho_-,\rho_+;\lambda,\mu)$ and ${\bf v}'=(\rho_-',\rho_+';\lambda',\mu')$
be elements of $\mathcal{E}_D$ such that
$\varphi({\bf v})=\varphi({\bf v}')$.
Abbreviate
$$
W=E_D(\rho_-,\rho_+;{\bf 1},{\bf 1},\lambda,\mu),
\qquad
W'=E_D(\rho_-',\rho_+';{\bf 1},{\bf 1},\lambda',\mu'),
$$
and write
$$
\{b_0,a_1,b_1,\dots,a_{D-1},b_{D-1},a_D\},
\qquad
\{b_0',a_1',b_1',\dots,a_{D-1}',b_{D-1}',a_D'\}
$$
for the adapted block bases of $W$ and $W'$, respectively.
Since $\varphi({\bf v})=\varphi({\bf v}')$, there exists an
$\widetilde{\mathcal H}$-module isomorphism
$\psi: W \to W'$.
Since $\mathbb{C}b_0$ is a $B$-eigenspace with nonzero eigenvalue that is invariant under $t_0$, so is $\mathbb{C}\psi(b_0)$.
Since $\mathbb{C}b_0'$ is the only such $B$-eigenspace of $W'$, we have
\begin{equation}\label{eq:psi.b0}
\psi(b_0)=\gamma b_0'
\end{equation}
for some $\gamma\in\mathbb{C}^{\times}$.
Applying $B$ and $t_0$ to both sides of \eqref{eq:psi.b0}, respectively, we obtain
$$
\mu_0=\mu_0',
\qquad
\rho_-=\rho_-'.
$$
We now proceed inductively.
Suppose that
\begin{equation}\label{eq:psi.bi}
\psi(b_i)=\gamma b_i'
\end{equation}
for some $0\leq i\leq D-1$.
Applying $B$ to \eqref{eq:psi.bi} gives
$\mu_i=\mu_i'$.
Since the two-dimensional $t_1$-blocks with respect to
$\{b_i,a_{i+1}\}$ and $\{b_i',a_{i+1}'\}$ are both $S(r_1,1)$, we have
\begin{equation}\label{eq:t1.bi}
t_1b_i=k(r_1)b_i-a_{i+1},
\qquad
t_1b_i'=k(r_1)b_i'-a_{i+1}'.
\end{equation}
Using \eqref{eq:psi.bi}, \eqref{eq:t1.bi}, and the fact that $\psi$ commutes with the action of $t_1$, we obtain
\begin{equation}\label{eq:psi.ai+1}
\psi(a_{i+1})=\gamma a_{i+1}'.
\end{equation}
Suppose now that $i\leq D-2$.
Using \eqref{eq:psi.ai+1} and the fact that $\psi$ commutes with the action of $A$, we obtain
$\lambda_{i+1}=\lambda_{i+1}'$.
Thus the two-dimensional $t_0$-blocks with respect to
$\{a_{i+1},b_{i+1}\}$ and
$\{a_{i+1}',b_{i+1}'\}$ are both
$R(r_0,\lambda_{i+1},1)$.
Since the lower-left entry of this matrix is nonzero, \eqref{eq:psi.ai+1} and the fact that $\psi$ commutes with the action of $t_0$ give
$\psi(b_{i+1})=\gamma b_{i+1}'$.
Therefore by induction,
$$
\lambda_i=\lambda_i'
\quad (1\leq i\leq D-1),
\qquad 
\mu_i=\mu_i'
\quad (0\leq i\leq D-1),
$$
and
$\psi(a_D)=\gamma a_D'$.
Applying $t_0$ to this equation gives
$\rho_+=\rho_+'$.
Combining these observations, we have ${\bf v}={\bf v}'$.
Thus $\varphi$ is injective.

Consequently, $\varphi$ is bijective.

(ii) For $D=0$, the assertion is immediate.
For $D\geq 1$, the proof is similar to (i).
\end{proof}

\section{Examples}\label{Sec:ex}

In this section,
we investigate some examples of finite-dimensional irreducible $\widetilde{\mathcal{H}}$-modules.
The first example concerns an even-dimensional multiplicity-free $\widetilde{\mathcal H}$-module arising in a combinatorial setting.

\begin{example}\label{ex:dual-polar}
{\rm
A subfamily of the even-dimensional modules $E_D$
constructed in Subsection~\ref{even subsec} arises naturally from dual polar graphs.
We show that the $\widetilde{\mathcal{H}}$-module associated with a dual polar graph in
\cite{LeeTanaka} belongs to this subfamily.

Let $\Gamma$ be a dual polar graph with vertex set $X$ and diameter
$D\geq 3$ over a finite field of order $q$, and let $e$ denote the
parameter associated with $\Gamma$; see \cite[Section~9.4]{BCN}.
Fix a vertex $x\in X$ and a maximal clique $C$ containing $x$.
For $0\leq i\leq D$, let $\Gamma_i(x)$ denote the set of vertices at distance $i$ from $x$.
For $0\leq i\leq D-1$, let $C_i$ denote the set of vertices at distance
$i$ from $C$, and define
$$
C_i^-=\Gamma_i(x)\cap C_i,
\qquad
C_i^+=\Gamma_{i+1}(x)\cap C_i.
$$
Let $\widehat{C}_i^-$ and $\widehat{C}_i^+$ denote the characteristic
vectors of $C_i^-$ and $C_i^+$, respectively.
Let $W$ be the subspace spanned by these vectors.
By \cite{LeeTanaka}, $W$ is $2D$-dimensional with the ordered basis
$$
\mathcal{C}
=
\{
\widehat{C}_0^-,\widehat{C}_0^+,
\widehat{C}_1^-,\widehat{C}_1^+,
\dots,
\widehat{C}_{D-1}^-,\widehat{C}_{D-1}^+
\}.
$$
Set
$$
b_i=\widehat{C}_i^-,
\qquad
a_{i+1}=\widehat{C}_i^+
\qquad (0\leq i\leq D-1).
$$
Then
$$
\mathcal{C} = \{b_0,a_1,b_1,a_2,\dots,b_{D-1},a_D\}.
$$

Now, fix $\epsilon\in\mathbb{C}^{\times}$ such that
$\epsilon^2=-q^{-D-e}$, and set
$$
r_0=q^{-\frac{D}{2}},
\qquad
r_1=\epsilon q^{\frac{D}{2}},
\qquad
\rho_-=\rho_+=q^{-\frac{D}{2}}.
$$
For $1\leq i\leq D-1$, define
$$
\lambda_i=q^{\frac{D}{2}-i},
\qquad
c_i=q^{D-i}-1,
$$
and, for $0\leq i\leq D-1$, define
$$
\mu_i=q^{-i},
\qquad
s_i=-\epsilon^{-1}q^{-\frac{D}{2}}.
$$
Substituting these parameters into the block matrices
$T_0,U_0,T_1,U_1$ defined in \eqref{e.mat:T0}--\eqref{e.mat:U1} 
yields precisely the matrices used in \cite{LeeTanaka} 
to represent $t_0,u_0,t_1,u_1$, respectively, on the primary 
module $W$ with respect to $\mathcal{C}$.

Recall that $A=u_0t_0$ and $B=t_1u_1$.
With respect to $\mathcal{C}$, the matrices representing $A$ and $B$ are
$$
{\rm diag}
\bigl(
	0,q^{\frac{D}{2}-1},
	0,q^{\frac{D}{2}-2},
	\dots,
	0,q^{-\frac{D}{2}+1},
	0,q^{-\frac{D}{2}}
\bigr)
$$
and
$$
{\rm diag}
\bigl(
	1,0,
	q^{-1},0,
	q^{-2},0,
	\dots,
	q^{-D+1},0
\bigr),
$$
respectively.
Therefore
\begin{equation}\label{jointAB-containments}
a_i\in W_{q^{\frac{D}{2}-i},0}
\qquad (1\leq i\leq D),
\qquad
b_i\in W_{0,q^{-i}}
\qquad (0\leq i\leq D-1).
\end{equation}
Since $q>1$, the pairs
$(q^{\frac{D}{2}-i},0)$ for $1\leq i\leq D$ and
$(0,q^{-i})$ for $0\leq i\leq D-1$ are mutually distinct.
Since both $A$ and $B$ are diagonal with respect to $\mathcal{C}$,
it follows that every nonzero joint eigenspace of $(A,B)$ is
one-dimensional.
Therefore $W$ is $(A,B)$-multiplicity-free.
Moreover, the containments in \eqref{jointAB-containments}, together with the block forms
of $T_0$ and $T_1$, show that $\mathcal{C}$ is an even adapted block basis.

The scalars $\lambda_1,\dots,\lambda_{D-1}$ are mutually distinct and satisfy
$\lambda_i\notin \{r_0,r_0^{-1}\}$ for $1\leq i\leq D-1$.
The scalars $\mu_0,\dots,\mu_{D-1}$ are mutually distinct, and none of
$c_1,\dots,c_{D-1},s_0,\dots,s_{D-1}$ is zero.
Hence Theorem~\ref{even irreducible} implies that $W$ is an irreducible $\widetilde{\mathcal{H}}$-module.

Consequently, $W$ is isomorphic to the module $E_D$ corresponding to
the parameters specified above.
Thus the construction in \cite{LeeTanaka} provides a combinatorial
realization of a member of the even-dimensional family classified in
Theorem~\ref{thm:main}.
}
\end{example}

The next example shows that there is a
finite-dimensional irreducible $\widetilde{\mathcal{H}}$-module
which is not multiplicity-free.

\begin{example}
{\rm
Let $V = F \oplus E$,
where $F\cong \mathbb{C}^2 \cong E$.
Let $\{a_1,a_2\}$ and $\{b_1,b_2\}$ be ordered bases of $F$ and $E$, respectively.
Then we have an ordered basis $\mathfrak{B}:=\{a_1,a_2,b_1,b_2\}$ of $V$.
Choose $r_0,r_1, \mu \in \mathbb{C}^{\times}$ with $r_0^2 \neq 1$.
Set 
\begin{center}
$P=\begin{pmatrix}r_0&0\\0&r_0^{-1}\end{pmatrix}$, \quad
$G=\begin{pmatrix}1&1\\1&2\end{pmatrix}$
\end{center}
and let $Q=GPG^{-1}$.
Now, define the block matrices
$T_0,U_0,T_1,U_1$ as follows:
\begin{center}
$T_0 = \begin{pmatrix}P&0\\0&Q\end{pmatrix}$, \quad
$U_0 = \begin{pmatrix}0&0\\0&I_2\end{pmatrix}$,\\[5pt]
$T_1 = \begin{pmatrix}(r_1+r_1^{-1})I_2&-r_1^{-1}I_2\\ r_1I_2&0\end{pmatrix}$, \quad
$U_1 = \begin{pmatrix}0&0\\-r_1\mu I_2&0\end{pmatrix}$.
\end{center}
A direct computation shows that
$T_0,U_0,T_1,U_1$ satisfy the defining relations in
Definition~\ref{nil.H}.
Hence there exists an $\widetilde{\mathcal{H}}$-module structure on $V$ such that
the matrices representing $t_0,u_0,t_1,u_1$ with respect to $\mathfrak{B}$
are $T_0,U_0,T_1,U_1$, respectively.

Now, we claim that $V$ is irreducible.
Let $W$ be an $\widetilde{\mathcal{H}}$-submodule of $V$.
Let $w \in W$.
Then there exist $x \in F$ and $y \in E$ such that
$w = x+y$.
Hence we obtain
\begin{center}
$y = u_0x + u_0 y = u_0 (x+y) = u_0w \in W$,
\end{center}
so $x \in W$.
This implies that
\begin{center}
$W= W_F \oplus W_E$,
\end{center}
where $W_F := W \cap F$ and $W_E := W \cap E$.
Define $\varphi: W_F \to W_E$ by $a \mapsto u_1a$.
Then it is routine to check that $\varphi$ is a $\mathbb{C}$-vector space isomorphism.
Hence $\dim(W_F) = \dim(W_E)$.

Suppose to the contrary that $\dim(W_F) = 1$.
Then there exist $\alpha, \beta \in \mathbb{C}$ such that
$\{w_F:= \alpha a_1 + \beta a_2\}$ is a basis of $W_F$.
Consider the vector $(\alpha,\beta)^T$
as the coordinate vector of $w_F$ with respect to $\{a_1,a_2\}$.
Since $t_0$ preserves $W_F$, there exists $k \in \mathbb{C}$ such that
$P (\alpha,\beta)^T = k (\alpha,\beta)^T$.
Hence we obtain
\begin{center}
$r_0\alpha a_1 + r_0^{-1} \beta a_2 = k \alpha a_1 + k\beta a_2$.
\end{center}
If both $\alpha$ and $\beta$ are nonzero,
then $r_0 = k = r_0^{-1}$,
which means that $r_0^2 = 1$, a contradiction.
Hence either $\alpha \neq 0$ and $\beta = 0$ or
$\alpha = 0$ and $\beta \neq 0$.
Thus $W_F = \mathbb{C}a_j$ for some $j \in \{1,2\}$.
Since $u_1a_j = -r_1\mu b_j$ and $\varphi$ is an isomorphism,
we have $W_E = \mathbb{C}b_j$.
Let $e_1 = (1,0)^T$ and $e_2 = (0,1)^T$.
As $t_0$ preserves $W_E$,
the vector $e_j$ is an eigenvector of $Q$.
However,
\begin{center}
$Q = \begin{pmatrix}2r_0-r_0^{-1}&-r_0+r_0^{-1}\\2r_0-2r_0^{-1}&-r_0+2r_0^{-1}\end{pmatrix}$,
\end{center}
and $r_0^2 \neq 1$,
so neither $e_1$ nor $e_2$ is an eigenvector of $Q$, a contradiction.
Hence $\dim(W_F) \neq 1$.
Therefore either $W_F=0$ and $W_E=0$, or $W_F=F$ and $W_E=E$.
Thus either $W = 0$ or $W = V$.
Consequently, $V$ is irreducible.

On the other hand,
note that on the $\widetilde{\mathcal{H}}$-module $V$,
the matrices representing $A$ and $B$ with respect to $\mathfrak{B}$
are given as follows:
\begin{center}
$A : \quad {\rm blockdiag}(0,Q)$, \quad $B: \quad {\rm blockdiag}(\mu I_2,0)$,
\end{center}
which means that $V_{0,\mu} = F$,
and hence $\dim(V_{0,\mu}) = 2$.
Thus $V$ is not $(A,B)$-multiplicity-free.
}
\end{example}

The following example shows that there is a multiplicity-free finite-dimensional irreducible
$\widetilde{\mathcal{H}}$-module which does not have an adapted block basis.

\begin{example}
{\rm
Let $V$ be a two-dimensional vector space with an ordered basis $\{b_0,a_1\}$.
Choose $c, r_0,r_1, \mu \in \mathbb{C}^{\times}$.
Define the matrices $T_0,U_0,T_1,U_1$ as follows:
\begin{center}
$T_0 = \begin{pmatrix}r_0&c\\0&r_0^{-1}\end{pmatrix}$,\quad
$U_0 = \begin{pmatrix}1&-cr_0\\0&0\end{pmatrix}$,\\[5pt]
$T_1 = \begin{pmatrix}0&-r_1\\ r_1^{-1}&r_1+r_1^{-1}\end{pmatrix}$,\quad
$U_1 = \begin{pmatrix}0&r_1\mu\\0&0\end{pmatrix}$.
\end{center}
A direct computation shows that
$T_0,U_0,T_1,U_1$ satisfy the defining relations in
Definition~\ref{nil.H}.
Hence there exists an $\widetilde{\mathcal{H}}$-module structure on $V$ such that
the matrices representing $t_0,u_0,t_1,u_1$ with respect to
$\{b_0,a_1\}$ are $T_0,U_0,T_1,U_1$, respectively.
Also, note that on the $\widetilde{\mathcal{H}}$-module $V$,
the matrices representing $A$ and $B$ with respect to $\{b_0,a_1\}$ are given as follows:
\begin{center}
$A : \quad {\rm diag}(r_0,0)$, \quad $B: \quad {\rm diag}(0,\mu)$.
\end{center}
Hence $V$ is $(A,B)$-multiplicity-free.
On the other hand,
suppose to the contrary that $V$ has an adapted block basis, say $\{b_0',a_1'\}$.
By the definition of an even adapted block basis,
$b_0' \in V_{0,\mu_0}$ for some $\mu_0 \in \mathbb{C}^{\times}$.
Since the only nonzero joint eigenspace of the form $V_{0,\beta}$ is $V_{0,\mu}= \mathbb{C}a_1$,
we have $\mu_0 = \mu$ and $b_0' \in \mathbb{C}a_1$.
However,
\begin{center}
$t_0a_1 = cb_0 + r_0^{-1} a_1 \notin \mathbb{C}a_1$
\end{center}
since $c \neq 0$.
Therefore $t_0$ does not preserve $\mathbb{C}b_0'$, a contradiction.
Hence $V$ does not have an adapted block basis.

Now, let $W$ be an $\widetilde{\mathcal{H}}$-submodule of $V$.
Suppose that $\dim(W) = 1$.
Then there exist $\alpha, \beta \in \mathbb{C}$ such that
$\{w := \alpha b_0 + \beta a_1\}$ is a basis of $W$.
Since $u_0$ preserves $\mathbb{C}w$,
there exists $k \in \mathbb{C}$ such that $u_0w = kw$.
Hence we obtain
\begin{center}
$k \alpha b_0 + k \beta a_1 =  kw = u_0w = (\alpha-cr_0 \beta) b_0$.
\end{center}
This implies that $k \beta = 0$.
If $\beta = 0$,
then $w = \alpha b_0$,
so we have
\begin{center}
$t_1w = \alpha t_1 b_0 = \alpha r_1^{-1} a_1 \notin W$,
\end{center}
a contradiction.
Hence $k = 0$.
Therefore we obtain $\alpha = c r_0 \beta$,
so $w = \beta(cr_0b_0 + a_1)$.
This implies that $\{w':= cr_0b_0 + a_1\}$ is a basis of $W$.
Since $u_1 w' \in W$,
there exists $k' \in \mathbb{C}$ such that $u_1w' = k'w'$.
Observe that
\begin{center}
$k'cr_0b_0 + k' a_1 = k'w' = u_1w' = r_1\mu b_0$,
\end{center}
so $k' = 0$,
which means that $r_1 \mu= 0$, a contradiction.
Thus there is no one-dimensional $\widetilde{\mathcal{H}}$-submodule of $V$.
Consequently, $V$ is irreducible.
}
\end{example}

\acknowledgments

J. Park is supported by the National Research Foundation of Korea (NRF) grant funded by the Korea government (MSIT) (RS-2024-00356153).

\end{document}